\documentclass[11pt]{article}
\def\thetitle{Spanning trees at the connectivity threshold}

\usepackage{graphicx}

\usepackage{amsmath,amssymb}

\usepackage[usenames,dvipsnames,svgnames,table]{xcolor}
\definecolor{CombinatoricaAqua}{HTML}{00698C}
\definecolor{CombinatoricaBlue}{HTML}{3A3293}
\definecolor{CombinatoricaBrown}{HTML}{66220C}
\definecolor{CombinatoricaRed}{HTML}{DF2A27}
\definecolor{HarvardCrimson}{rgb}{0.6471, 0.1098, 0.1882}

\makeatletter
\let\reftagform@=\tagform@
\def\tagform@#1{\maketag@@@
	{(\ignorespaces\textcolor{CombinatoricaBrown}{#1}\unskip\@@italiccorr)}}
\renewcommand{\eqref}[1]{\textup{\reftagform@{\ref{#1}}}}
\makeatother

\usepackage[backref=page]{hyperref}
\hypersetup{
	unicode,
	pdfencoding=auto,
	pdfauthor={Yahav Alon, Michael Krivelevich and Peleg Michaeli},
	pdftitle={\thetitle},
	pdfsubject={},
	pdfkeywords={},
	colorlinks=true,
	citecolor=CombinatoricaBlue,
	linkcolor=CombinatoricaAqua,
	anchorcolor=CombinatoricaBrown,
	urlcolor=HarvardCrimson}

\usepackage{amsthm}
\usepackage{thmtools}
\usepackage{bbm}
\usepackage{enumitem}
\usepackage[capitalize]{cleveref}
\Crefname{fact}{Fact}{Facts}
\Crefname{claim}{Claim}{Claims}

\makeatletter
\pdfstringdefDisableCommands{\let\cref\@firstofone}
\makeatother

\declaretheoremstyle[
spaceabove=\topsep, spacebelow=\topsep,
headfont=\color{CombinatoricaBrown}\normalfont\bfseries,
bodyfont=\itshape,
]{thm}
\declaretheoremstyle[
spaceabove=\topsep, spacebelow=\topsep,
headfont=\color{CombinatoricaBrown}\normalfont\bfseries,
bodyfont=\normalfont,
]{dfn}
\declaretheoremstyle[
spaceabove=0.5\topsep, spacebelow=0.5\topsep,
headfont=\color{CombinatoricaBrown}\normalfont\bfseries,
bodyfont=\normalfont,
]{rmk}

\declaretheorem[style=thm,parent=section]{theorem}
\declaretheorem[style=thm,sibling=theorem]{lemma}

\declaretheorem[style=thm,sibling=theorem]{claim}
\declaretheorem[style=thm,sibling=theorem]{proposition}


\usepackage[nobysame,msc-links]{amsrefs}


\BibSpecAlias{misc}{webpage}
\BibSpec{book}{%
	+{}  {\PrintPrimary}                {transition}
	+{,} { \textbf}                     {title} 
	+{.} { }                            {part}
	+{:} { \textit}                     {subtitle}
	+{,} { \PrintEdition}               {edition}
	+{}  { \PrintEditorsB}              {editor}
	+{,} { \PrintTranslatorsC}          {translator}
	+{,} { \PrintContributions}         {contribution}
	+{,} { }                            {series}
	+{,} { \voltext}                    {volume}
	+{,} { }                            {publisher}
	+{,} { }                            {organization}
	+{,} { }                            {address}
	+{,} { \PrintDateB}                 {date}
	+{,} { }                            {status}
	+{}  { \parenthesize}               {language}
	+{}  { \PrintTranslation}           {translation}
	+{;} { \PrintReprint}               {reprint}
	+{.} { }                            {note}
	+{.} {}                             {transition}
	+{}  {\SentenceSpace \PrintReviews} {review}
}

\makeatletter
\renewcommand{\PrintNames@a}[4]{%
	\PrintSeries{\name}
	{#1}
	{}{ and \set@othername}
	{,}{ \set@othername}
	{}{ and \set@othername}
	{#2}{#4}{#3}%
}
\makeatother

\makeatletter
\def\mathcolor#1#{\@mathcolor{#1}}
\def\@mathcolor#1#2#3{%
	\protect\leavevmode
	\begingroup
	\color#1{#2}#3%
	\endgroup
}
\makeatother
\definecolor{Red}{rgb}{0.618,0,0}
\definecolor{Blue}{rgb}{0,0,1}
\definecolor{Green}{rgb}{0,0.298,0}

\usepackage{sectsty}
\sectionfont{\color{CombinatoricaBrown}}
\subsectionfont{\color{CombinatoricaBrown}}
\subsubsectionfont{\color{CombinatoricaBrown}}

\usepackage{soul}
\soulregister\ref7


\usepackage{pifont}
\usepackage{calc}

\usepackage[a4paper]{geometry}
\geometry{
	a4paper,
	left=25.63mm,
	right=25.63mm,
	top=36.25mm,
	bottom=36.25mm
}

\title{\thetitle}

\makeatletter
\def\namedlabel#1#2{\begingroup
  #2%
  \def\@currentlabel{#2}%
  \phantomsection\label{#1}\endgroup
}
\makeatother

\usepackage{mleftright}
\mleftright

\newcommand{\defn}[1]{{\bfseries #1}}
\newcommand{\ceil}[1]{\lceil #1 \rceil}
\newcommand{\floor}[1]{\lfloor #1 \rfloor}

\newcommand{\eps}{\varepsilon}
\newcommand{\cA}{\mathcal{A}}
\newcommand{\cB}{\mathcal{B}}

\newcommand{\sm}{\setminus}
\newcommand{\es}{\varnothing}

\newcommand{\pr}[0]{\mathbb{P}}
\newcommand{\E}[0]{\mathbb{E}}
\DeclareMathOperator{\Var}{Var}
\newcommand{\whp}[0]{\textbf{whp}}
\newcommand{\Dist}[1]{\mathsf{#1}}
\newcommand{\Bin}{\Dist{Bin}}
\newcommand{\Bernoulli}{\Dist{Bernoulli}}



\author{Yahav Alon \and Michael Krivelevich \and Peleg Michaeli}
\AtEndDocument{\bigskip{\footnotesize%
\par

\textsc{Yahav Alon} \par
\textsc{School of Mathematical Sciences, Tel Aviv University,
  Tel Aviv 6997801, Israel} \par
  \textit{Email:}
  \href{mailto:yahavalo@tauex.tau.ac.il}{\texttt{yahavalo@tauex.tau.ac.il}} \par
  \addvspace{\medskipamount}

\par

\textsc{Michael Krivelevich} \par
\textsc{School of Mathematical Sciences, Tel Aviv University,
  Tel Aviv 6997801, Israel} \par
  \textit{Email:}
  \href{mailto:krivelev@tauex.tau.ac.il}{\texttt{krivelev@tauex.tau.ac.il}}\par
  Research supported in part by USA-Israel BSF grant 2018267 and by ISF grant 1261/17. \par
  \addvspace{\medskipamount}

\par

\textsc{Peleg Michaeli} \par
\textsc{School of Mathematical Sciences, Tel Aviv University,
  Tel Aviv 6997801, Israel} \par
  \textit{Email:}
  \href{mailto:peleg.michaeli@math.tau.ac.il}{\texttt{peleg.michaeli@math.tau.ac.il}} \par
  This research is supported by ERC starting grant 676970 RANDGEOM and by ISF grant 1207/15.\par
  \addvspace{\medskipamount}

}}

\usepackage{subcaption}
\usepackage{tikz}
\usetikzlibrary{calc}

\interfootnotelinepenalty=10000

\newcommand{\KC}{\mathsf{KC}}
\newcommand{\NP}{\mathbf{NP}}
\newcommand{\Small}{\textsc{Small}}

\usepackage{tabto}

\begin{document}
\maketitle

\begin{abstract}
  We present an explicit connected spanning structure that appears in a random graph just above the connectivity threshold with high probability.
\end{abstract}

\section{Introduction}
\label{sec:intro}
The \defn{binomial random graph} $G(n,p)$ is a graph on $n$ vertices, in which every pair of vertices is connected independently with probability $p$.
It is a well known and thoroughly studied model (see, e.g.,~\cites{Bol,FK,JLR}).
A fundamental result, due to Erd\H{o}s and R\'enyi~\cite{ER59}, is that $G(n,p)$ exhibits a sharp threshold for connectivity at $p=p(n)=\log{n}/n$\footnote{Here and later the logarithms have natural base.}.
More precisely, setting $p=(\log{n}+f(n))/n$ and $G\sim G(n,p)$, if $f(n)\to\infty$ then $G$ is with high probability\footnote{With probability tending to $1$ as $n$ tends to infinity.} (\whp{}) connected, and if $f(n)\to-\infty$ then $G$ is \whp{} not connected.
This threshold coincides with the threshold for the disappearance of isolated vertices (vertices of degree $0$), and, in fact, isolated vertices are the bottleneck for connectivity in a stronger sense.

Evidently, a connected graph has a spanning tree.
This raises the natural question of {\em which} spanning trees can we expect to find in a connected random graph.
The standard proof for the connectivity threshold is obtained using the dual definition of connectivity, namely, by showing that \whp{} there is an edge in every cut; this seems to provide no hint of which trees appear above the threshold.
One can check, however, that just above the connectivity threshold a random graphs contains $\Theta(\log{n})$ vertices of degree $1$.
Obviously, these vertices must all be leaves in any spanning tree.
In particular, one cannot expect to find a Hamilton path (or any spanning tree with a constant number of leaves).
In this work, we present an \emph{explicit} spanning tree that appears in a random graph just above the connectivity threshold \whp{}.
In fact, we prove something stronger, by presenting a concrete unicyclic connected spanning subgraph.
Let $n,t,\ell$ be integers such that $t\cdot (\ell+1) \le n$. A \defn{KeyChain} with parameters $n,t,\ell$, denoted $\KC(n,t,\ell)$, is a cycle on $n-t$ vertices with additional $t$ vertices of degree $1$ (``keys") which have distinct neighbours in that cycle, where the distance between two consecutive such neighbours is $\ell$.
Formally, it is the graph $H=(V,E)$ with $V=[n]$ and
\[
  E = \{ \{i,i+1\} \mid i \in [n-t-1] \}
  \cup \{ \{n-t,1\} \}
  \cup \{ \{i\ell ,n-t+i\} \mid i\in [t] \}.
\]
See \cref{fig:keychain} for an example.
Consider the following sequence: $a_1 = 1,\ a_{j+1} = \ceil{a_j \cdot \log n / 100}$, and set $j_0$ to be the minimum index $j$ for which $a_j \ge 10n / \log n$.
Let further $t:=\floor{\log{n}}$ and $\ell=2j_0$.

\begin{figure*}[t]
  \captionsetup{width=0.879\textwidth,font=small}
  \centering
  \begin{tikzpicture}
    \tikzset{vertex/.style={fill,circle,inner sep=1.5pt}}
    \foreach \i in {1,2,...,24} {
      \node[vertex] (\i) at (\i*15:4 and 1) {};
      \draw (\i) -- (\i*15+15:4 and 1);
    }
    \foreach \i in {1,2,...,5} {
      \node[vertex] (k\i) at ($(\i*45-60:4 and 1)+(\i*45-60:1)$) {};
      \draw (k\i) -- (\i*45-60:4 and 1);
    }
  \end{tikzpicture}
  \caption{$\KC(24,5,3)$.}
  \label{fig:keychain}
\end{figure*}
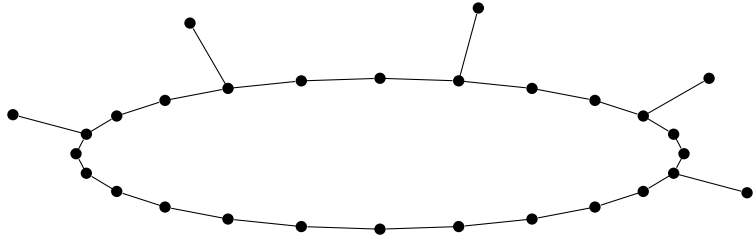

\begin{theorem}\label{thm:kc}
  Let $p$ be such that $np - \log{n} \rightarrow \infty$.
  Then, $G\sim G(n,p)$ \whp{} contains $\KC(n,t,\ell)$ as a subgraph.
\end{theorem}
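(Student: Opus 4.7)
The plan is to use sprinkling to separate the ``structural'' decisions from the ``Hamiltonicity closure''. Write $p = 1 - (1-p_1)(1-p_2)$ with $np_1 - \log n \to \infty$ and $np_2 \to \infty$, so that $G \sim G(n,p)$ splits as $G_1 \cup G_2$ with independent $G_i \sim G(n,p_i)$. I would use $G_1$ to set up the keys and the short paths, and reserve the independent randomness of $G_2$ for the final Hamilton-path closure.

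First I would greedily select the key set $K = \{k_1,\dots,k_t\}$ together with distinct designated neighbours $v_1,\dots,v_t \in V \setminus K$ using $G_1$-edges, absorbing into $K$ the (\whp{} few) vertices of unusually small degree so that $G[V \setminus K]$ has minimum degree at least~$3$. Next, for each $i \in [t-1]$, I would construct in $G_1$ a $v_i v_{i+1}$-path of length exactly $\ell = 2j_0$, internally disjoint from all earlier such paths, by running BFS for $j_0$ layers from each endpoint inside the currently unused set. The recurrence $a_{j+1} = \lceil a_j \log n / 100 \rceil$ encodes a multiplicative Chernoff lower bound on the layer size: the branching factor is essentially $np_1 \gtrsim \log n$, weakened by a constant to absorb the concentration error. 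Once a layer reaches $10n/\log n$ at depth $j_0$, two such BFS balls are \whp{} joined by an edge of $G_1$ (the expected number of such edges is $\gg 1$), and tracing a $j_0$-step geodesic on each side yields a path of length exactly $2j_0=\ell$. As the total number of vertices touched is $O(t\ell) = O((\log n)^2 / \log\log n) = o(n)$, the procedure can be iterated over all $i \in [t-1]$.

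What remains is to realise the long arc from $v_t$ back to $v_1$ as a Hamilton path in $G[R]$, where $R$ is the unused portion of $V \setminus K$ together with $\{v_1,v_t\}$. We have $|R| = n - o(n)$, and $R$ inherits minimum degree at least~$3$ from $G_1$ (each vertex loses only $o(\log n)$ edges to the deleted set). At this point I would reveal $G_2$: restricted to $R$, it is an independent $G(|R|, p_2)$-layer with $np_2 \to \infty$, which, combined with the inherited min-degree condition and a P\'osa-style rotation--extension argument, produces the required Hamilton path from $v_t$ to $v_1$ in $G_1 \cup G_2$ \whp{}.

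The hardest part will be this final step: Hamilton-connectedness between two prescribed endpoints near the connectivity threshold is a delicate property that is not immediately given by classical Hamiltonicity results. The role of the sprinkled second round $G_2$ is precisely to render the two prescribed endpoints ``generic'' from its viewpoint---because the short-path construction inside $G_1$ is independent of $G_2$, a uniform rotation--extension argument on the residual graph should cover any pair of endpoints, with the inherited expansion sufficient to drive it.
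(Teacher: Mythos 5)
Your high-level plan (keys, equal-length paths via layered BFS expansion, then a spanning Hamilton path through the residue) matches the paper's architecture, and the growth recurrence $a_{j+1}=\lceil a_j\log n/100\rceil$ is used by the paper in exactly the same way. However, two steps of your proposal have genuine gaps.

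\textbf{Small-degree vertices cannot be absorbed into $K$.} You propose to add to $K$ all vertices of ``unusually small degree'' so that $G[V\setminus K]$ has minimum degree at least~$3$. This is not achievable: $K$ must be precisely the key set of the KeyChain, of size exactly $t=\lfloor\log n\rfloor$, and in the interesting regime ($np-\log n\to\infty$ but $np-\log n-\log\log n\not\to\infty$) the number of vertices of degree $2$ is already $\omega(\log n)$, and the number of vertices of degree at most $\log n/10$ is polynomially large. You cannot discard these vertices; they must be spanned by the KeyChain, and if left untouched they break both the BFS-expansion step (some of their few neighbours may already be used) and the residual minimum-degree/expansion needed for the Hamilton-path step. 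The paper's resolution is a separate preparatory partition (Lemma~\ref{lem:partition}, via the Local Lemma): all low-degree vertices other than $K$, \emph{together with all of their neighbours}, are placed in a reserve set $V^\star$ of size $\approx 2\gamma n$, the comb is built entirely inside $V'=V\setminus V^\star$, and the final Hamilton path covers $V^\star$ plus the unused part of $V'$. This way no vertex of the residue loses a critical neighbour, and the residue inherits the local structure of $G$ required by Lemma~\ref{lem:ham}.

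\textbf{The sprinkled Hamilton closure is underdeveloped and faces a quantitative obstacle.} The paper does not sprinkle: it proves a property \ref{P:boosters} that holds \textbf{whp} for a single sample $G\sim G(n,p)$, namely that every sparse $(h/4,2)$-expander $H\subseteq G$ on any $W$ with $|W|\ge n/10$ has a booster in $G[W]$, and then uses random sparsification of $G[W]$ (not of the underlying probability space) to obtain such an $H$. Your sprinkling decomposition only guarantees $np_2\to\infty$, possibly much slower than $\log\log n$. With so little $G_2$-randomness, the standard sequential-boosting argument does not union-bound cleanly: you must run up to $|R|\sim n$ boosting rounds, and each round must succeed with probability $1-o(1/n)$; the per-round failure probability is roughly $\exp(-\Theta(np_2))$, which is $n^{-o(1)}$, not $o(1/n)$. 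Alternatively, a union bound over all potential intermediate sparse expanders (as in \ref{P:boosters}) costs roughly $\exp(\Theta(n\log^2 n))$ and would require $n^2p_2\gg n\log^2 n$, again far beyond $np_2\to\infty$. Moreover, you have not shown that $G_1[R]$ is an expander to begin with. You correctly flag this step as the hardest; as written it is a gap, and it is precisely what Lemmas~\ref{lem:boosters} and~\ref{lem:ham} of the paper are designed to supply.
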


As $\KC(n,t,\ell)$ is connected and spanning,
and in fact contains $\left\lceil \frac{n-t}{2} \right\rceil$ distinct (unlabelled) spanning trees when $n$ is sufficiently big,
\cref{thm:kc} gives a set of distinct trees, all of which appear in $G\sim G(n,p)$ \whp{}.
Our proof provides, however, further spanning (but not necessarily connected) graphs that appear \whp{} in random graphs above the connectivity threshold (such as KeyChains on linearly many vertices alongside a disjoint cycle spanning the remaining set of vertices).

\subsection{Discussion}
\paragraph{Hamiltonicity}
Koml\'os and Szemer\'edi~\cite{KS83} and independently Bollob\'as~\cite{Bol84} showed that the threshold for the appearance of a Hamilton cycle in random graphs is $p=(\log{n}+\log\log{n})/n$.
This coincides with the threshold for the disappearance of vertices of degree $1$, an obvious obstacle in obtaining a Hamilton cycle.
In particular, if $np-\log{n}-\log{\log{n}}\to\infty$ and $G\sim G(n,p)$, then $G$ contains a Hamilton path as a spanning tree.
Thus, our result is interesting, and perhaps surprising, for values of $p$ which satisfy $np=\log{n}+f(n)$, where $f(n)\to\infty$ but $f(n)-\log{\log{n}}\not\to\infty$.
Our result states that in this intermediate regime, we can still construct a \emph{specific} tree which contains a path that spans almost all of the vertices of the graph.

\paragraph{Spanning trees in random graphs}
\Cref{thm:kc} gives an explicit set of trees, each of which spans a binomial random graph just above the connectivity threshold \whp{}.
Which other spanning trees can we expect to find around the same edge density?
Recently Montgomery~\cite{Mon19} solved a conjecture by Kahn (see~\cite{KLW16}) according to which every $n$-vertex tree of maximum degree at most $\Delta$ appears in $G(n,p)$ \whp{} if $np\ge C\log{n}$ for $C=C(\Delta)$.
Earlier, Hefetz, Krivelevich and Szab\'o~\cite{HKS12} showed that every bounded degree $n$-vertex tree with either linearly many leaves or a linearly long bare path appears in $G(n,p)$ \whp{} if $np\ge (1+\eps)\log{n}$ for some $\eps>0$.

We wish to stress that our declared goal in this paper is to present one concrete family of spanning trees appearing typically at the threshold for connectivity, and this goal is achieved in \cref{thm:kc}. Undoubtedly a wider class of spanning trees can be shown to appear \textbf{whp} for this value of $p(n)$ using similar (but possibly more involved) techniques and arguments, but we decided not to pursue this goal here, aiming rather for relative simplicity.
The question of which trees are likely to appear in $G(n,p)$ if one only requires $np-\log{n}\to\infty$ therefore remains open.

Another possible extension of our main result would be to consider the random graph process, and to try and prove that some concrete spanning subgraph (say, our KeyChain) appears typically at the very moment the graph becomes connected. This would imply our result by the standard connections between random graph processes and models $G(n,p), G(n,m)$. Again we chose not to push in this direction, preferring simplicity.

\paragraph{The maximum common subgraph problem}
Finding a maximum common subgraph of two graphs
(sometimes called \emph{maximum common edge subgraph}, or MCES)
is an $\NP$-hard problem
(for example, there is a trivial polynomial reduction from the Hamiltonicity problem to MCES)
with real-world applications in computer science and in chemistry (see, e.g.,~\cites{Bok81,RGW02}).
\Cref{thm:kc} can be thought of as an explicit (connected) subgraph
which is typically common to random graphs past the connectivity threshold.
In particular, writing $M(G_1,G_2)$ for the size of a maximum common subgraph of $G_1,G_2$, \cref{thm:kc} gives $M(G_1,G_2)\ge n$ \whp{} for independently sampled $G_1,G_2\sim G(n,p)$, where $np-\log{n}\to\infty$.
The following proposition, whose proof appears in \cref{sec:mcs}, shows that this bound is asymptotically tight.
\begin{proposition}\label{prop:mcs}
  For every $\eps>0$ there exists $\delta>0$ for which the following holds.
  Suppose $p\le n^{-1+\delta}$, and let $G_1,G_2\sim G(n,p)$ be two independent random graphs.
  Then, \whp{}, $M(G_1,G_2)\le(1+\eps)n$.
\end{proposition}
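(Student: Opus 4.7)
The plan is to reformulate $M(G_1,G_2)$ as a maximum over full permutations and then apply a union bound together with a Chernoff-type tail estimate. Specifically, I would first argue that
\[
  M(G_1,G_2)=\max_{\pi\in S_n}X_{\pi},\qquad X_{\pi}:=\bigl|\{e\in E(G_1):\pi(e)\in E(G_2)\}\bigr|,
\]
where $\pi$ ranges over bijections $[n]\to[n]$. The inequality $\max_\pi X_\pi\le M(G_1,G_2)$ is clear since each permutation realises a (full) common subgraph with $X_\pi$ edges. Conversely, any partial injection $\sigma\colon V_1\to V_2$ achieving $M$ can be extended to a full permutation $\pi$ by matching $[n]\setminus V_1$ to $[n]\setminus V_2$ arbitrarily; under $\pi$ every edge previously contributing to $\sigma$'s count still contributes, so $X_\pi\ge M$.

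For a fixed $\pi$, the indicators $\mathbbm{1}\bigl[e\in E(G_1)\text{ and }\pi(e)\in E(G_2)\bigr]$ are mutually independent $\Bernoulli(p^2)$ variables as $e$ ranges over $\binom{[n]}{2}$ (using independence of $G_1$ and $G_2$ and that $\pi$ acts as a bijection on edges), so $X_\pi\sim\Bin\!\bigl(\binom{n}{2},p^{2}\bigr)$. Its mean is at most $\tfrac12 n^{2\delta}$, much smaller than $(1+\eps)n$, and a standard Chernoff-type tail bound yields
\[
  \pr[X_\pi\ge(1+\eps)n]\le\left(\frac{e\binom{n}{2}p^{2}}{(1+\eps)n}\right)^{\!(1+\eps)n}\le\exp\!\bigl(-(1+\eps)(1-2\delta)\,n\log n+O(n)\bigr).
\]

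A union bound over the $n!\le\exp(n\log n)$ choices of $\pi$ then gives
\[
  \pr\bigl[M(G_1,G_2)\ge(1+\eps)n\bigr]\le\exp\!\Bigl(\bigl[1-(1+\eps)(1-2\delta)\bigr]\,n\log n+O(n)\Bigr),
\]
in which the bracketed coefficient equals $2\delta(1+\eps)-\eps$. This is strictly negative precisely when $\delta<\eps/(2(1+\eps))$; choosing, for instance, $\delta:=\eps/(3(1+\eps))$ renders it at most $-\eps/3$, so the whole bound is $\exp(-\Omega(n\log n))=o(1)$. The only substantive step here is the reformulation $M=\max_\pi X_\pi$; once that is granted, everything else is a routine first-moment computation, and it is the tight race between the factorial $n!$ and the Chernoff tail that dictates the requirement $\delta<\eps/(2(1+\eps))$.
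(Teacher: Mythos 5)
Your proposal is correct and takes essentially the same approach as the paper: both union-bound over the $n!$ vertex permutations together with a binomial-tail estimate on the number of edges of $G_1$ that land on edges of $G_2$, and both obtain the same exponent $[1-(1+\eps)(1-2\delta)]n\log n$ governing the choice of $\delta$. Your explicit reformulation $M(G_1,G_2)=\max_\pi X_\pi$ is merely a cleaner packaging of the paper's bound $\binom{\binom{n}{2}}{m}\cdot n!\cdot p^{2m}$, which is identical after applying $\binom{\binom{n}{2}}{m}p^{2m}\le\bigl(e\binom{n}{2}p^2/m\bigr)^m$.
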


\subsection{Proof outline}
We begin by listing useful properties of random graphs just above the connectivity threshold.
With these properties in hand, we continue as follows.
First, we take care of all ``keys'' of the KeyChain:
these consist of all vertices of degree $1$ in the graph, in addition to some vertices of degree $2$.
We aim to connect neighbours of these keys by equal-length paths, constructing a comb-like graph.
Eventually, we would want to connect the ends of the comb by a path which spans the remaining vertices.
This cannot be done naively, however, since some of the remaining vertices may have most (or all) of their neighbours inside the comb.
To overcome this difficulty we make a preparatory step in which we put aside small degree vertices with their neighbours.
This is stated precisely in \cref{sec:const}.
The ``comb'' is then found in the set of the remaining vertices.

\paragraph{Organisation of the paper}
We start by reviewing some preliminaries in \cref{sec:prem}.
In \cref{sec:random} we list and prove useful (and mostly standard) properties of random graphs just above the connectivity threshold.
In \cref{sec:const} we prepare our graph, handling future ``keys'' and small degree vertices, construct the ``comb'' and close it to a KeyChain, concluding the proof of \cref{thm:kc}.
We finish by a quick proof of \cref{prop:mcs} in \cref{sec:mcs}.

\section{Preliminaries}
\label{sec:prem}

In this section we provide several definitions and results to be used in this paper.

\subsection{Notation} \label{subsec:prem:notation}

The following graph theoretic notation is used.
For a graph $G=(V,E)$ and two disjoint vertex subsets $U,W\subseteq V$, we let $E_G(U,W)$ denote the set of edges of $G$ adjacent to exactly one vertex from $U$ and one vertex from $V$, and let $e_G(U,W)=|E_G(U,W)|$.
Similarly, $E_G(U)$ denotes the set of edges spanned by a subset $U$ of $V$, and $e_G(U)$ stands for $|E_G(U)|$.
The (external) neighbourhood of a vertex subset $U$, denoted by $N_G(U)$, is the set of vertices in $V\sm U$ adjacent to a vertex of $U$, and for a vertex $v\in V$ we set $N_G(v)=N_G(\{v\})$.
The degree of a vertex $v\in V$, denoted by $d_G(v)$, is its number of incident edges.
For an integer $0 \leq i < |V|$, we let $D_i = D_i(G)$ be the set of vertices of degree $i$ in $G$,
and let $D_{\le i}=D_{\le i}(G):=\bigcup_{j=0}^i D_j$.
Finally, we let $n_i = n_i(G) := |D_i|$ and $n_{\le i} = n_{\le i}(G) := |D_{\le i}|$.
In the above notation, we sometimes omit the subscript $G$ if the graph $G$ is clear from the context.
We occasionally suppress the rounding notation to simplify the presentation. For two functions $f=f(n)$, $g=g(n)$ we write $f\sim g$ to indicate that $f = (1+o(1))g$.

\subsection{Probabilistic bounds} \label{subsec:prem:prob}

We will make use of the following useful bound (see, e.g., in,~\cite{JLR}*{Chapter 2}).

\begin{theorem}[Chernoff bounds]\label{thm:chernoff}
  Let $X=\sum_{i=1}^n X_i$, where $X_i\sim\Bernoulli(p_i)$ are independent, and let $\mu=\E{X}=\sum_{i=1}^n p_i$.
  Let $0<\alpha<1<\beta$.
  Then
  \begin{align*}
    \pr(X\le \alpha\mu) &\le \exp(-\mu(\alpha\log{\alpha}-\alpha+1)),\\
    \pr(X\ge \beta\mu) &\le \exp(-\mu(\beta\log{\beta}-\beta+1)).
  \end{align*}
\end{theorem}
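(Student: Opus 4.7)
The plan is to use the standard exponential moment method (i.e., the Bernstein/Chernoff trick): bound $\Pr(X \ge \beta\mu)$ by pushing it through an auxiliary exponential random variable, and do the symmetric thing for the lower tail. Because the $X_i$ are independent Bernoulli, the moment generating function factorises cleanly, which is what makes the approach sharp.

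Concretely, for the upper tail I would fix $t > 0$ and apply Markov's inequality to $e^{tX}$, yielding
\[
  \pr(X \ge \beta \mu) \le e^{-t\beta\mu}\, \E{e^{tX}} = e^{-t\beta\mu}\prod_{i=1}^{n}\left(1 - p_i + p_i e^{t}\right).
\]
Using $1 + x \le e^{x}$ with $x = p_i(e^t - 1)$ gives
\[
  \E{e^{tX}} \le \exp\!\left((e^{t}-1)\sum_{i=1}^n p_i\right) = \exp\!\left(\mu(e^t - 1)\right),
\]
so $\pr(X \ge \beta\mu) \le \exp\!\left(\mu(e^t - 1) - t\beta\mu\right)$. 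Now I would optimise over $t$: differentiating the exponent with respect to $t$ and setting the result to zero gives $t = \log\beta$ (which is positive since $\beta > 1$), and substituting back yields exactly $\exp\!\left(-\mu(\beta\log\beta - \beta + 1)\right)$.

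The lower tail is a mirror argument: for $t > 0$ apply Markov's inequality to $e^{-tX}$, obtaining $\pr(X \le \alpha\mu) \le e^{t\alpha\mu}\E{e^{-tX}}$. The same factorisation and the same inequality $1 + x \le e^x$ (now with $x = p_i(e^{-t}-1)$) give $\E{e^{-tX}} \le \exp(\mu(e^{-t}-1))$; optimising over $t$ leads to $t = -\log\alpha > 0$ (since $\alpha < 1$) and the claimed bound $\exp\!\left(-\mu(\alpha\log\alpha - \alpha + 1)\right)$.

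I do not expect any serious obstacle here; the only mild point is checking that the optimal $t$ has the correct sign in each case (which is where the hypotheses $\alpha < 1 < \beta$ are used), and verifying that the inequality $1+x \le e^x$ is applied with valid $x$ (it holds for all real $x$, so no issue). Everything else is routine algebra. Since the paper only uses these bounds as a black box from \cite{JLR}, I would keep the proof exactly at the sketch level above rather than grinding through the optimisation in full detail.
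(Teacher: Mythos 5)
The paper states this as a known result and cites \cite{JLR}*{Chapter 2} without giving a proof, so there is no in-paper argument to compare against. Your sketch is the standard (and correct) exponential moment method: Markov on $e^{\pm tX}$, factorisation via independence, $1+x\le e^x$, and optimisation at $t=\log\beta$ resp.\ $t=-\log\alpha$, which is exactly the derivation found in the cited reference.
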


The following are trivial yet useful bounds.
\begin{claim}\label{cl:bin:uptail}
  Let $X\sim\Bin(n,p)$ with $\mu=np$ and let $1\le k\le n$.
  Then
  \begin{equation*}
    \pr(X\ge k) \le \left(\frac{enp}{k}\right)^k.
  \end{equation*}
\end{claim}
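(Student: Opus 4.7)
The plan is to use a union bound argument combined with the standard estimate for binomial coefficients. The event $\{X \ge k\}$ is precisely the event that at least $k$ of the indicators $X_1,\dots,X_n$ equal $1$. This event can be rewritten as the union, over all subsets $S \subseteq [n]$ of size $k$, of the event $\{X_i = 1 \text{ for all } i \in S\}$, since any outcome in which at least $k$ indicators are on certainly contains some $k$-subset of indices on which all indicators equal $1$.

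Applying the union bound over the $\binom{n}{k}$ subsets of size $k$, and using independence so that each event $\{X_i = 1 \text{ for all } i \in S\}$ has probability exactly $p^k$, yields
\[
\pr(X \ge k) \le \binom{n}{k} p^k.
\]
I would then invoke the standard inequality $\binom{n}{k} \le (en/k)^k$, which follows from $\binom{n}{k} \le n^k/k!$ together with $k! \ge (k/e)^k$ (a consequence of the Taylor expansion $e^k = \sum_{j \ge 0} k^j/j! \ge k^k/k!$). Substituting this into the previous display gives
\[
\pr(X \ge k) \le \left(\frac{en}{k}\right)^k p^k = \left(\frac{enp}{k}\right)^k,
\]
as desired.

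There is essentially no obstacle here: the argument is a two-line union bound together with a textbook estimate on $\binom{n}{k}$. The only mild subtlety worth noting is that the events over which one unions are not disjoint, so the estimate $\binom{n}{k} p^k$ genuinely overcounts; but since we only require an upper bound, the overcount is harmless.
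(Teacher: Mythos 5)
Your proof is correct, and it is the standard argument for this bound. The paper itself does not supply a proof of this claim, instead deferring to a reference (``For a proof see, e.g., [FKMP18]''); the union-bound-over-$k$-subsets argument followed by $\binom{n}{k} \le (en/k)^k$ is precisely the textbook derivation one would expect to find there.
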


For a proof see, e.g.,~\cite{FKMP18}.

\begin{claim}\label{cl:bin:lowtail}
  Let $X\sim\Bin(n,p)$ with $\mu=np$, write $q=1-p$ and let $1\le k\le np/q$.
  Then
  \begin{equation*}
    \pr(X\le k) \le \left(\frac{enp}{kq}\right)^k e^{-np}.
  \end{equation*}
\end{claim}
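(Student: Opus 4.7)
My plan is to apply the standard Chernoff moment generating function (MGF) method. For any $t > 0$, Markov's inequality applied to $e^{-tX}$ gives
\[
\pr(X \le k) \;\le\; e^{tk}\E{e^{-tX}} \;=\; e^{tk}\bigl(q + pe^{-t}\bigr)^{n}.
\]
Writing $u = e^{-t} \in (0, 1]$ and optimising, the unique interior critical point is $u^{\star} = kq/(p(n-k))$, which is feasible precisely when $k \le np$. For such $k$, substituting $u^{\star}$ and simplifying gives
\[
\pr(X \le k) \;\le\; \left(\frac{np}{k}\right)^{k} \left(\frac{nq}{n-k}\right)^{n-k}.
\]
The second factor equals $(1 + (k-np)/(n-k))^{n-k}$, which by the elementary inequality $(1+x)^{m} \le e^{mx}$ is at most $e^{k-np}$. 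Combining,
\[
\pr(X \le k) \;\le\; \left(\frac{enp}{k}\right)^{k} e^{-np} \;\le\; \left(\frac{enp}{kq}\right)^{k} e^{-np},
\]
the final inequality by $q \le 1$; this settles the main regime $k \le np$.

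In the residual range $np < k \le np/q$, the MGF optimum lies outside $(0, 1]$ and the argument degenerates. There I would show directly that the right-hand side of the claimed inequality is already at least $1$, rendering it trivial. Writing it as $\exp\bigl(k\log(enp/(kq)) - np\bigr)$ and substituting $y = kq/(np) \in [q, 1]$, the exponent simplifies to $(np/q)\bigl[y(1 - \log y) - q\bigr]$. Since $\tfrac{d}{dy}[y - y\log y] = -\log y \ge 0$ on $(0, 1]$, the map $y \mapsto y(1 - \log y)$ is non-decreasing, so its minimum over $[q, 1]$ is $q(1 - \log q) \ge q$, and the bracket is non-negative throughout.

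The main obstacle is essentially bookkeeping rather than conceptual difficulty: the MGF optimisation and the slightly unnatural $q$ factor in the target bound must be tracked carefully, and the feasibility of $u^{\star}$ is what forces the split at $k = np$. A purely combinatorial alternative — dominating $\sum_{j \le k} \binom{n}{j} p^{j} q^{n-j}$ by a geometric series with ratio $kq/(p(n-k))$ and leading term $T_{k} \le (enp/(kq))^{k} e^{-np}$ — also works, but only up to a multiplicative constant which would need to be absorbed into the slack of the $e$ factor; the MGF route matches the stated form without this loss.
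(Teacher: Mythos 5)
The paper offers no proof of this claim — it is introduced only by the remark that ``the following are trivial yet useful bounds'' — so there is no argument of record to compare against. Your Chernoff/MGF derivation is correct and complete: the optimiser $u^\star = kq/(p(n-k))$ lies in $(0,1]$ precisely when $k \le np$, in which range substituting it and applying $(1+x)^m \le e^{mx}$ together with $q\le 1$ yields the stated bound, while in the residual range $np < k \le np/q$ your monotonicity argument for $y \mapsto y(1-\log y)$ on $(0,1]$ correctly shows the right-hand side is at least $1$, making the inequality vacuous there.
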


\subsection{P\'{o}sa's lemma and corollaries} \label{subsec:prem:posa}

For an overview of the rotation--extension technique, we refer the reader to~\cites{Kri16}.

\begin{lemma}[P\'{o}sa's lemma~\cite{Pos76}]\label{lem:Posa}
Let $G$ be a graph, let $P = v_0,\dots,v_t$ be a longest path in $G$, and let $R$ be the set of all $v \in V(P)$ such that there exists a path $P'$ in $G$ with $V(P') = V(P)$ and with endpoints $v_0$ and $v$.
Then $|N(R)| \leq 2|R|-1$.
\end{lemma}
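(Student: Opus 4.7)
My plan is to prove this by the rotation--extension technique. Given the longest path $P = v_0 v_1 \ldots v_t$, define a \emph{rotation} of a path $Q = u_0 u_1 \ldots u_s$ at a chord $\{u_s, u_i\}$ of $Q$ with $i \le s-2$: the sequence $u_0, \ldots, u_i, u_s, u_{s-1}, \ldots, u_{i+1}$ is a new path on the same vertex set, now with endpoint $u_{i+1}$. Since each rotation preserves the vertex set, $R$ is exactly the set of vertices $v$ that appear as an endpoint of some path reached from $P$ by a finite sequence of rotations fixing $v_0$.

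First I would observe that $N(R) \subseteq V(P)$. Indeed, if $v \in R$ is the endpoint of a path $P_v$ with $V(P_v) = V(P)$ and $u \in N(v) \setminus V(P)$, then appending $u$ to $P_v$ yields a path strictly longer than $P$, contradicting maximality.

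Second, I would establish the \emph{successor rule}. For each $v \in R$ fix a path $P_v = u_0^{v} u_1^{v} \ldots u_t^{v}$ with $u_0^{v} = v_0$ and $u_t^{v} = v$. For every neighbor $u_i^{v} \in N(v)$ with $1 \le i \le t-1$, a single rotation at $u_i^{v}$ produces a path with endpoint $u_{i+1}^{v}$, so $u_{i+1}^{v} \in R$. Thus, every neighbor of every $v \in R$ (other than possibly $v_0$) is the immediate predecessor, on some rotated path, of another element of $R$.

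The final step is the counting, and this is where the main difficulty lies. I would organize $R$ as an increasing union $\{v_t\} = R_0 \subsetneq R_1 \subsetneq \cdots \subsetneq R_k = R$, where $R_{j+1}$ is obtained from $R_j$ by adjoining all endpoints reachable by a single further rotation from a path whose endpoint lies in $R_j$. Simultaneously I would track the set $N(R_j) \cap V(P)$, showing inductively that at each step the number of freshly added neighbors is at most twice the number of freshly added endpoints. The base case $R_0 = \{v_t\}$ satisfies $|N(R_0)| \le 1$ (else a rotation is available and $R_0$ is not maximal), matching $2|R_0| - 1$; the inductive step charges each new neighbor either to the pivot of a new rotation or to the edge of $P_v$ immediately preceding the new endpoint. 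The principal obstacle is that the paths $P_v$ differ across $v \in R$, so a naive injection $N(R) \to R$ fails; the standard remedy is to maintain a canonical ``rotation tree'' recording, for each $v \in R$, a chosen rotation sequence producing $P_v$, and to account for each pivot edge only once along this tree. Executing this bookkeeping with the successor rule yields $|N(R)| \le 2|R| - 1$.
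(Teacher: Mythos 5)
Your opening moves are sound: the definition of a rotation, the observation that $N(R) \subseteq V(P)$ by maximality of $P$, and the ``successor rule'' (a rotation of $P_v$ at an interior $G$-neighbour of $v$ that is not the $P_v$-predecessor of $v$ produces a new member of $R$, namely that vertex's $P_v$-successor). The gap is in the final counting step, which you correctly flag as the difficulty but then leave as a sketch that does not close. First, the proposed invariant is false at the base: you set $R_0=\{v_t\}$ and assert $|N(R_0)|\le 1$, but $N(R_0)=N_G(v_t)$ can have many elements; that $|N_G(v_t)|\ge 2$ would force $R_0\subsetneq R$ does not make $|N(R_0)|\le 2|R_0|-1$ true, so the induction never starts. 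Second, the ``rotation tree'' bookkeeping is not carried out, and it is unclear how it could be: the successor rule speaks about adjacencies on the rotated paths $P_v$, which vary with $v$, whereas the target bound counts something determined by the \emph{original} path $P$ (namely $\{v_{i\pm 1}: v_i\in R\}$). Charging to pivots or to ``the edge of $P_v$ preceding the new endpoint'' produces objects on different paths with no evident injection into $P$-neighbours of $R$.

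The missing idea is an invariance claim that transports path-adjacency back to $P$: if $y\in V(P)\setminus(R\cup\{v_0\})$ and $y$ is not adjacent on $P$ to any vertex of $R$, then the two path-neighbours of $y$ are the same on every path obtained from $P$ by rotations fixing $v_0$. One proves this by induction on the number of rotations: a single rotation changes the path-neighbourhood of only three vertices, namely the old endpoint and new endpoint (both in $R$, hence not $y$) and the pivot, which is a path-neighbour of the new endpoint and hence, by the inductive hypothesis, would have to be $P$-adjacent to a member of $R$, ruling out $y$. Given this claim, take any $y\in N(R)\setminus\{v_0\}$ with $y\sim v$, $v\in R$: on $P_v$ the vertex $y$ is interior, so either $y$ is the $P_v$-predecessor of $v$ (then by invariance $v$ is a $P$-neighbour of $y$), or rotating $P_v$ at $y$ places the $P_v$-successor of $y$ (by invariance, a $P$-neighbour of $y$) into $R$. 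Either way $y$ has a $P$-neighbour in $R$. A short extra observation shows that $v_0\in N(R)$ forces $v_1\in R$. Hence $N(R)\subseteq\{v_{i-1},v_{i+1}: v_i\in R\}$, and since $v_t\in R$ has no $P$-successor, $|N(R)|\le 2|R|-1$. This is the step your proposal is missing; without it, the successor rule alone does not control $|N(R)|$.
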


Recall that a non-edge of $G$ is called a \defn{booster} if adding it to $G$ creates a graph which is either Hamiltonian or whose longest path is longer than that of $G$.
For a positive integer $k$ and a positive real $\alpha$ we say that a graph $G=(V,E)$ is a \defn{$(k,\alpha)$-expander} if $|N(U)|\ge\alpha|U|$ for every set $U\subseteq V$ of at most $k$ vertices.
The following is a widely-used fact stating that $(k,2)$-expanders have many boosters. For a proof see, e.g.,~\cite{Kri16}.

\begin{lemma}\label{lem:posa:boosters}
  Let $G$ be a connected $(k,2)$-expander which contains no Hamilton cycle.
  Then $G$ has at least $(k+1)^2/2$ boosters.
\end{lemma}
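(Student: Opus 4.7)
My plan is a standard two-step application of the rotation-extension method. First I would take a longest path $P = v_0, v_1, \ldots, v_t$ in $G$ and apply \cref{lem:Posa} with endpoint $v_0$ to get the set $R$ of all rotation-reachable second endpoints of paths with vertex set $V(P)$; by the lemma $|N(R)| \le 2|R| - 1$. Since $G$ is a $(k, 2)$-expander, the bound $|R| \le k$ would force $|N(R)| \ge 2|R|$, a contradiction, so I conclude $|R| \ge k + 1$. Then I iterate: for each $v \in R$ there is a path $P_v$ with $V(P_v) = V(P)$ and endpoints $v_0$ and $v$; since $P_v$ is itself a longest path in $G$, applying \cref{lem:Posa} again to $P_v$ with $v$ as the fixed endpoint yields a set $S_v$ of reachable second endpoints, and the same expansion argument gives $|S_v| \ge k + 1$.

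The key step is to show that every pair $\{u, v\}$ with $v \in R$ and $u \in S_v$ is a non-edge of $G$ and in fact a booster. If $\{u, v\}$ were already in $E(G)$, then combining it with the rotated $v$-to-$u$ path on $V(P)$ would produce a cycle $C$ with $V(C) = V(P)$; if $V(P) = V(G)$ this is a Hamilton cycle, contradicting the hypothesis, while if $V(P) \subsetneq V(G)$ then the connectivity of $G$ supplies an edge from $V(P)$ to its complement, which together with $C$ yields a path strictly longer than $P$, contradicting maximality. Hence $\{u, v\} \notin E(G)$, and exactly the same cycle-plus-external-edge construction, now performed inside $G + \{u, v\}$, exhibits either a Hamilton cycle or a path longer than $P$ in the augmented graph, so $\{u, v\}$ is a booster.

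Finally I would count: there are at least $|R| \cdot (k + 1) \ge (k + 1)^2$ ordered pairs $(v, u)$ with $v \in R$ and $u \in S_v$, each giving a booster. Any unordered pair $\{u, v\}$ is counted at most twice among these ordered pairs, so the number of distinct boosters is at least $(k + 1)^2 / 2$, as required.

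There is no real obstacle here; the only point that needs care is to ensure that the second round of rotations is legitimate, which holds because the rotation operation preserves the vertex set of the path (so each $P_v$ is again a longest path on $V(P)$) and \cref{lem:Posa} can be reapplied verbatim to it.
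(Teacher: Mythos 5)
Your proof is correct. The paper does not supply its own argument for this lemma but cites~\cite{Kri16}; your double application of P\'{o}sa's lemma (first with $v_0$ fixed to obtain $|R|\ge k+1$, then with each $v\in R$ fixed to obtain $|S_v|\ge k+1$), the observation that each resulting pair must be a non-edge and hence a booster, and the factor-of-two correction when passing from ordered to unordered pairs are precisely the standard rotation--extension argument found there.
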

It is not hard to see that an $(n/4,2)$-expander on $n$ vertices is connected, a fact that we will use later on.
We will also use the following deterministic lemma from \cite{GKM21}.

\begin{lemma}[\cite{GKM21}*{Lemma 4.6}]\label{lem:expander}
Let $m,d \geq 1$ be integers, and let $H$ be a graph on $h\ge 4m$ vertices satisfying the following properties:
\begin{enumerate}
    \item $\delta(H) \geq 2$;
    \item No vertex $v \in V(H)$ with $d(v) < d$ is contained in a $3$- or a $4$-cycle, and every two distinct vertices $u,v \in V(H)$ with $d(u),d(v) < d$ are at distance at least $5$ apart;
    \item Every set $F \subseteq V(H)$ of size at most $5 m$ spans at most $d|F| / 10$ edges;
    \item There is an edge between every pair of disjoint sets $F_1,F_2 \subseteq V(H)$ of size $m$ each.
\end{enumerate}
Then $H$ is a $(h/4,2)$-expander (and, in particular, it is connected).
\end{lemma}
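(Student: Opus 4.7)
The plan is to argue by contradiction: suppose $U \subseteq V(H)$ with $|U| \le h/4$ satisfies $|N(U)| < 2|U|$, put $W = U \cup N(U)$ so $|W| < 3|U|$, and use that every edge incident to $U$ lies in $W$. If $|U| \ge m$ the proof is short: since $|V \sm W| > h - 3|U| \ge h/4 \ge m$, one picks subsets $F_1 \subseteq U$ and $F_2 \subseteq V \sm W$ of size $m$ each, and property (4) provides an edge between $F_1$ and $F_2$, contradicting $N(U) \subseteq W$.

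For the harder regime $|U| < m$, we have $|W| < 3m \le 5m$, so property (3) applies: $e_H(W) \le d|W|/10$. Since every edge of $H$ from $U$ stays in $W$, a standard degree count yields
\[
  \sum_{v \in U} d(v) \le 2 e_H(W) < \frac{3d|U|}{5}.
\]
Partition $U = L \sqcup M$ into vertices of degree $< d$ and $\ge d$ respectively. Property (1) gives $\sum_{v \in U} d(v) \ge 2|L| + d|M|$; combined with the display above, the cases $d \le 3$ immediately force $U = \es$, and for $d \ge 4$ one obtains $|L| = \Omega(|U|)$.

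For $d \ge 4$ the structural content of property (2) is crucial. Since low-degree vertices are pairwise at distance $\ge 5$, the set $L$ is independent and the neighbourhoods $\{N(v) : v \in L\}$ are pairwise disjoint; the absence of $3$- and $4$-cycles through $L$-vertices further forces $N(L)$ to be independent, and each $a \in N(L)$ to have $d(a) \ge d$ (else $a$ and its unique $L$-neighbour would form two low-degree vertices at distance $1$). Thus $|N(L)| = \sum_{v \in L} d(v) \ge 2|L|$ with $N(L) \subseteq W \sm L$. Incorporating into the edge count of $W$ the additional contribution $\sum_{v \in N(U)} d_W(v) \ge |N(U)|$ (every vertex of $N(U)$ has at least one edge back to $U$) and rearranging against the sparsity bound of property (3) should then squeeze out $|N(U)| \ge 2|U|$, contradicting the starting assumption.

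The main technical obstacle will be calibrating this final inequality so that it closes uniformly for every $d \ge 4$: for moderate $d$ the refined $|N(U)|$-term suffices, but for large $d$ one probably has to exploit the disjoint $2$-balls $\{N^{\le 2}(v) : v \in L\}$, each of which contains at least $1 + d \cdot d(v)$ vertices thanks to the short-cycle constraint and the high degree of vertices in $N(v)$, in order to recover the missing expansion.
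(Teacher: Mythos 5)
Your treatment of $|U|\ge m$ via property (4) is correct, and the setup for $|U|<m$ — the edge count $e_H(W)\le d|W|/10$, the split $U=L\sqcup M$, the deduction $|L|=\Omega(|U|)$, and the structural facts about $L$ from property (2) — is all sound. But the argument does not actually close, and you say as much: ``should then squeeze out'' is where the proof ends. Concretely, feeding $\sum_{v\in U}d(v)\ge 2|L|+d|M|$ and $\sum_{v\in N(U)}d_W(v)\ge |N(U)|$ into $2e_H(W)\le d|W|/5$ and then using $|N(U)|<2(|L|+|M|)$ gives
\[
  (4-\tfrac{3d}{5})|L|+(\tfrac{2d}{5}+2)|M|<0,
\]
which is a contradiction only for $d\le 6$; for larger $d$ it merely bounds the ratio $|M|/|L|$ and the inequalities you have assembled yield no contradiction.

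The fallback you sketch for large $d$ is also flawed as stated: the $2$-balls around $L$-vertices are \emph{not} contained in $W$. If $v\in L$ and $a\in N(v)\cap N(U)$, then the neighbours of $a$ other than $v$ can lie entirely outside $W=U\cup N(U)$, so the bound $|B_2(v)|\ge 1+d\cdot d(v)$ does not lower-bound $|W|$. Only the second-neighbourhood reached through $N(v)\cap M$ is forced into $W$; this gives $|W|\ge |L|+|N(L)|+(d-1)|N(L)\cap M|$, and combining with $|W|<3|U|$ only shows $|N(L)\cap M|<3|M|/(d-1)$. That makes $N(L)\cap M$ small, hence $|N(U)|\ge |N(L)|-|N(L)\cap M|\ge 2|L|-o(|M|)$, but you still fall short of $2|L|+2|M|$ by roughly $2|M|$, and nothing in the proposal accounts for the expansion that must be contributed by $M$. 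So the $d\ge 7$ case is a genuine gap, not merely a calibration to be worked out.
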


\section{Properties of random graphs}
\label{sec:random}
We now present and prove some typical properties of random graphs above the connectivity threshold, to be used in the proof of \cref{thm:kc}.
The properties we present are fairly standard and the consequent proofs are mainly technical and may get tedious, so a first time reader may wish to skip them.

Denote
\[
  \Small = \Small(G) := D_{\le\log{n}/10}(G).
\]
Additionally, set
\[\gamma=10^{-4}.\]

\begin{lemma}\label{lem:gnp:prop}
  Let $1\ll f(n)\ll \log\log{n}$, $p=(\log{n}+f(n))/n$ and $G\sim G(n,p)$.
  Then, \whp{}, $G$ has the following properties:
  \begin{description}[leftmargin=!,labelwidth=\widthof{\bfseries (P1)}]

    \item[\namedlabel{P:maxdeg}{(P1)}]
      $\Delta(G) \leq 10\log n$;

    \item[\namedlabel{P:numkeys}{(P2)}]
      $|D_1|\le\log{n}\le|D_2|$;

    \item[\namedlabel{P:smalldist}{(P3)}]
      There is no path of length at most $0.2\log n / \log \log n$ in $G$ whose (possibly identical) endpoints lie in $\Small$;

    \item[\namedlabel{P:smallsmall}{(P4)}]
      $|\Small\cup N(\Small)|\le n^{0.6}$;

    \item[\namedlabel{P:smallsets}{(P5)}]
    There in no $U\subseteq V(G)$ with $|U|\leq 10n/\log n$, $e(U,V(G)\sm U) \geq |U|\log n/11 $ such that $|N(U)| \leq |U|\log n/18$;

    \item[\namedlabel{P:local:sparse}{(P6)}]
      Every set $U \subseteq V(G)$ of size at most $\gamma n/5000$ spans at most $\gamma\log n\cdot|U|/1000$ edges;

    \item[\namedlabel{P:intersect}{(P7)}]
      For every $U,W\subseteq V(G)$ disjoint with $10 n/\log n\leq |U|,|W|\leq n/9$, $|N(U)\cap N(W)|\geq n/9$;

    \item[\namedlabel{P:pseudorandom}{(P8)}]
      For every $U,W\subseteq V(G)$ disjoint with $|U|,|W|\geq \frac{\gamma n}{25000}$, $|E(U,W)|\ge \frac{1}{2}|U||W|\log{n}/n$;

  \end{description}
\end{lemma}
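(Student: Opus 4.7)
The plan is to verify each of the eight properties by standard first-moment or concentration arguments built on the Chernoff-type bounds (\cref{thm:chernoff,cl:bin:uptail,cl:bin:lowtail}). Two tail estimates drive most of the work: since $np \sim \log n$, \cref{cl:bin:lowtail} yields $\pr(\Bin(n-1,p) \le \log n/10) \le n^{-9/10 + o(1)}$, while \cref{cl:bin:uptail} gives $\pr(\Bin(n-1,p) \ge 10 \log n) \le n^{-10 + o(1)}$.

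The degree statistics (P1), (P2), (P4) follow fairly directly. For (P1), union-bound the upper tail over all $n$ vertices. For the upper half of (P2), I would compute $\E |D_1| = n(n-1)p(1-p)^{n-2} = \log n \cdot e^{-f(n)}(1+o(1)) = o(\log n)$ and apply Markov; for the lower half, $\E |D_2| \sim \tfrac{1}{2}(\log n)^2 e^{-f(n)} \gg \log n$ (using $f(n) \ll \log\log n$) and I would deduce concentration by a second-moment computation, showing $\Var |D_2| = O(\E |D_2|)$ via weak correlations between the indicators $\mathbbm{1}\{d(u)=2\}$ and applying Chebyshev. For (P4), the low-degree tail gives $\E |\Small| \le n^{1/10 + o(1)}$; combining with (P1) yields $|\Small \cup N(\Small)| \le n^{1/10 + o(1)} \cdot 11\log n \ll n^{0.6}$ \whp{}.

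Properties (P3), (P5), (P6) are ``no small irregular substructure'' conditions, to be handled by first moment over candidate configurations. For (P3), enumerate paths of length $k \le 0.2\log n/\log\log n$: there are at most $n^{k+1}$ such paths, each present with probability $p^k$, and the probability that both endpoints have degree at most $\log n/10$ after conditioning on the path is at most $n^{-9/5 + o(1)}$, yielding total expected count $n^{-4/5 + o(1)}(\log n)^k = o(1)$ even after summing over admissible $k$. For (P5) and (P6) I fix $|U|=u$ and, for (P5), a candidate neighbourhood $W$ of size $u\log n/18$, and bound the bad probability by a binomial upper tail via \cref{cl:bin:uptail}; the $u\log n$ exponent in the resulting estimate dominates the combinatorial factor $\binom{n}{u}\binom{n}{|W|}$ throughout the relevant range of $u$.

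Finally, (P7) and (P8) are Chernoff concentrations. For (P8), $|E(U,W)|$ is binomial with mean $|U||W|p \gg n$, so \cref{thm:chernoff} gives failure probability exponentially small in $|U||W|p$, easily beating the $4^n$ union bound over pairs. For (P7), each vertex outside $U \cup W$ lies in $N(U) \cap N(W)$ independently with probability at least $(1-e^{-10})^2 \ge 0.99$ since $p|U|, p|W| \ge 10$, and a Chernoff lower tail plus union bound over set pairs finishes it. The main technical delicacy I anticipate is the second-moment bound for $|D_2|$ in (P2), since $\E |D_2|$ sits only a polylogarithmic factor above the target $\log n$: one must carefully account for the covariance of $\mathbbm{1}\{d(u)=2\}$ and $\mathbbm{1}\{d(v)=2\}$ for pairs $u,v$ that are adjacent or share a common neighbour, so as to ensure $\Var |D_2| = o((\E |D_2|)^2)$ and Chebyshev remains sharp enough to conclude.
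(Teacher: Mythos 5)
Your proposal follows the paper's route for (P1)--(P6) and (P8). For (P7) you take a genuinely different, per-vertex probabilistic tack (the paper instead enumerates over a fixed witness set $X$ of $0.6n$ vertices, each missing $N(U)$ or $N(W)$). Your route can work, but there is a subtlety you gloss over: applying the lower-tail bound of \cref{thm:chernoff} directly to $Y := |N(U)\cap N(W)|$ (mean $\mu_Y \geq 0.77n$, target $n/9$, so $\alpha \approx 1/7$) gives failure probability only about $\exp(-(\alpha\log\alpha-\alpha+1)\mu_Y) \approx e^{-0.45n}$. This does not close: the union bound over pairs $(U,W)$ with worst-case sizes $|U|=|W|=n/9$ already costs roughly $\binom{n}{n/9}^2 \approx e^{0.71n}$, so the naive lower tail on $Y$ loses. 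One must instead upper-tail the failure count $Z := |\{v \notin U\cup W : v \notin N(U)\cap N(W)\}|$, a sum of independent indicators with tiny mean $\E Z \leq (7n/9)\cdot 2e^{-10}$; by \cref{cl:bin:uptail}, $\pr(Z\geq 0.6n) \leq (e\,\E Z/0.6n)^{0.6n} \leq e^{-4n}$, which comfortably beats the combinatorics. This is the probabilistic shadow of the paper's witness-set count. Two smaller points: in (P3), ``(possibly identical) endpoints'' means short \emph{cycles} through a single $\Small$-vertex must also be excluded; the paper treats this as a separate case, while your enumeration covers only paths. And your low-degree tail $n^{-9/10}$ should be roughly $n^{-0.67}$ (take $\alpha = 1/10$ in \cref{thm:chernoff}; $\alpha\log\alpha-\alpha+1 \approx 0.67$). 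Neither of these changes the conclusions of (P3)--(P4) once corrected, but they should be tidied up.
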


\begin{proof}[Proof of \ref{P:maxdeg}]
  Since $d(v)\sim\Bin(n-1,p)$ for every $v\in V(G)$, we have
  \[
    \pr(d(v)\ge 10\log{n})
    \le \binom{n}{10\log{n}}p^{10\log{n}}
    \le \left(\frac{enp}{10\log{n}}\right)^{10\log{n}} = o(1/n),
  \]
  and the statement follows by the union bound.
\end{proof}

\begin{proof}[Proof of \ref{P:numkeys}]
  The probability that a given vertex is of degree $1$ is at most $np(1-p)^{n-2}=o(\log{n}/n)$.
  Thus $\E[n_1]=o(\log{n})$ and by Markov's inequality $\pr(n_1>\log{n})=o(1)$.
 	The expectation of $n_2$ is $n\cdot \binom{n-1}{2} p^2 (1-p)^{n-3}=\omega(\log{n})$.
   On the other hand, the variance is
 	\begin{equation*}
  \begin{aligned}
 	  \Var \left[ n_2 \right]
    & = \E\left[ {n_2} ^2\right] -  \E\left[ {n_2}\right] ^2 \\
 	  & = -\E\left[ {n_2}\right] ^2 + \E\left[ {n_2}\right]
        + \sum_{\substack{(u,v)\in V(G)^2\\u\ne v}} \pr\left( d_G(u) = d_G(v) = 2 \right) \\
    & =   (-1 + o(1))\cdot \E\left[ {n_2}\right] ^2 + n(n-1)\cdot \left( \binom{n-2}{2}^2p^4(1-p)^{2n-7} + np^3(1-p)^{2n-6} \right) \\
    & =  o\left( \E\left[ {n_2}\right] ^2 \right) ,
  \end{aligned}
 	\end{equation*}
 	which, by Chebyshev's inequality, implies that $\pr(n_2 < \log n) = o(1)$.
\end{proof}

\begin{proof}[Proof of \ref{P:smalldist}]
  Write $L:=0.2\log{n}/\log{\log{n}}$.
  Let $1\le\ell\le L$ and let $P=(v_0,\ldots,v_\ell)$ be a sequence of $\ell+1$ distinct vertices from $V(G)$, with the one possible exception $v_0=v_\ell$.

  Suppose first that $v_0\ne v_\ell$.
  Let $S:= V(G) \sm \{v_0,v_1,v_{\ell-1},v_\ell\}$,
  let $\cA_P$ be the event that $P$ is contained in $G$ as a path, and
  let $\cA_d$ be the event that $d(\{v_0,v_\ell\},S) \le \log{n}/5$.
  By \cref{thm:chernoff} with $\alpha = \frac{\log{n}}{5\cdot (2n-5)p} = 1/10 + o(1)$, we obtain that
  \[
    \pr(\cA_d)
      \le \exp \left( -(2n-5)p \cdot ( \alpha \log{\alpha} - \alpha +1 ) \right)
      \le n^{-1.3}.
  \]
  The events $\cA_P,\cA_d$ are independent, hence $\pr(\cA_P\land \cA_d) \le p^\ell n^{-1.3}$.
  Let $\cA$ be the event that there exists a path $P=v_0,\ldots,v_\ell$ with $1\le\ell\le L$ in $G$ such that $\cA_P$ and $d(v_0),d(v_\ell)\le\log{n}/10$, the probability of which is at most $\pr(\cA_P\land \cA_d)$.
  By the union bound, summing over all sequence lengths and all sequences, we get
  \[
    \pr(\cA)
    \le \sum_{\ell=1}^L n^{\ell+1}p^\ell n^{-1.3}
    \le \sum_{\ell=1}^L n^{\frac{\log{(np)}}{\log{n}}\cdot \ell - 0.3}
    \le L\cdot n^{\frac{1.1\log{\log{n}}}{\log{n}}\cdot L -0.3}
    \le L\cdot n^{-1/20} = o(1).
  \]

  The case $v_0=v_\ell$ is similar.
  Let $S:=V(G)\sm \{v_1,v_{\ell-1}\}$ and let $\cA_d$ be the event $d(v_0,S)\le\log{n}/10$.
  Once again by \cref{thm:chernoff}, $\pr(\cA_d)\le n^{-0.6}$, and the events $\cA_P,\cA_d$ are independent, hence $\pr(\cA_p\land \cA_d)\le p^\ell n^{-0.6}$.
  Let $\cA'$ be the event that there exists a cycle $P$ of length $3\le\ell\le L$ such that $\cA_P$ and $d(v_0)\le\log{n}/10$.
  By the union bound, $\pr(\cA')\le \sum_{\ell=3}^L n^\ell p^\ell n^{-0.6}=o(1)$.

  Finally, observe that $\pr(G\in \ref{P:smalldist}) \ge 1-\pr (\cA) - \pr (\cA ')$, and so the statement is obtained.
\end{proof}

\begin{proof}[Proof of \ref{P:smallsmall}]
  Thus, by \cref{thm:chernoff} we have $\pr(d(v)\le\log{n}/10)\le n^{-0.6}$, and therefore by Markov's inequality $|\Small|\le n^{0.5}$ \whp{}.
  By the definition of $\Small$ we have that $|\Small\cup N(\Small)|\le n^{0.6}$ \whp{}.
\end{proof}

\begin{proof}[Proof of \ref{P:smallsets}]
  For $U\subseteq V(G)$ with $|U|=k$, let $\cA_U$ be the event that $e(U,V(G)\sm U)\ge k\log{n}/11$ and $|N(U)| \le k\log{n}/18$.
  On this event, there exists $W\subseteq V(G)$ of size $k\log{n}/18$, disjoint of $U$, which contains $N(U)$.
  Denote this event by $\cA_{U,W}$.
  Evidently, using \cref{cl:bin:uptail},
  \[
  \begin{aligned}
    \pr(\cA_{U,W})
    &\le \pr\left( \Bin(k^2\log{n}/18,p) \ge k\log{n}/11 \right)
     \cdot (1-p)^{k(n-k-k\log{n}/18)}\\
    &\le\left( \frac{11ek p}{18} \right)^{k\log{n}/11}
     \cdot e^{-pk(n-k-k\log{n}/18)}
     \le \left( \frac{11ek p}{18} \right)^{k\log{n}/11}
          \cdot e^{-0.4npk}.
  \end{aligned}
  \]
  For $k\le 10n/\log{n}$ let $\cA_k$ denote the event that there exists $U$ with $|U|=k$ for which $\cA_U$ occurs.
  By the union bound over all possible choices of $U,W$ we have that
  \[
  \begin{aligned}
    \pr(\cA_k) &\le \binom{n}{k+k\log{n}/18} \binom{k+k\log{n}/18}{k}
    \cdot\pr(\cA_{U,W})\\
    &\le \left(\frac{en}{k+k\log{n}/18}\right)^{(1/18+o(1))k\log{n}}
     \cdot \log^k{n}
    \cdot\pr(\cA_{U,W})\\
    &\le \left(\frac{50n}{k\log{n}}\right)^{(1/18+o(1))k\log{n}}
    \cdot
    \left( \frac{11ek p}{18} \right)^{k\log{n}/11}
              \cdot e^{-0.4npk}\\
    &\le \left[\left(\frac{n}{k\log{n}}\right)^{1/18-1/11+o(1)}
        \cdot 50^{1/17}
        \cdot 2^{1/11}
                  \cdot e^{-0.4}\right]^{k\log{n}}\\
    &\le \exp\left( k\log{n}\cdot \left(
        \log(10)/25 + \log(50)/17 + \log(2)/11 - 0.4
    \right) \right) = e^{-\Omega(k\log{n})}.
  \end{aligned}
  \]
  Finally, let $\cA$ denote the event that there exists $U$ with $1\le |U|\le 10n/\log{n}$ for which $\cA_U$ occurs.
  By the union bound over all possible cardinalities $k=|U|$ we have
  \[
    \pr(\cA)
      \le \sum_{k=1}^{10n/\log{n}}\pr(\cA_k)
      \le \sum_{k=1}^{\infty} e^{-\Omega(k\log{n})} = o(1).\qedhere
  \]

\end{proof}

\begin{proof}[Proof of \ref{P:local:sparse}]
  Fix $U\subseteq V(G)$ and set $\gamma'=\gamma/5000$.
  By \cref{cl:bin:uptail}, the probability that $|E(U)|\ge m$ for some $m\ge 1$ is at most $(e|U|^2p/m)^m$.
  Hence, by the union bound, the probability that there is a subset $U\subseteq V(G)$ negating \ref{P:local:sparse} is at most
  \begin{equation*}
  \begin{aligned}
    &\phantom{\le{}}  \sum_{k=1}^{\gamma' n}
      \binom{n}{k}\left(\frac{ekp}{5\gamma'\log{n}}\right)^{5\gamma'\log{n}\cdot k}\\
    &\le \sum_{k=1}^{\gamma' n}
      \left(\left(\frac{en}{k}\right)
        \cdot \left(\frac{0.6k}{\gamma' n}\right)^{5\gamma'\log{n}}\right)^k\\
    &\le \sum_{k=1}^{\gamma' n}
      \left(\frac{e}{\gamma'} \cdot 0.6^{\Omega(\log{n})}\right)^k
     = \sum_{k=1}^{\gamma' n} o(1)^k = o(1),
  \end{aligned}
  \end{equation*}
  and the statement follows.
\end{proof}

\begin{proof}[Proof of \ref{P:intersect}]
  If $U,W\subseteq V(G)$ are disjoint with $10 n/\log n\leq |U|,|W|\leq n/9$, and $|N(U)\cap N(W)| < n/9$, then there is a set $X\subseteq V(G)\setminus (U\cup W)$ of size $0.6n$ such that every vertex $x\in X$ is connected to no vertex in $U$, or connected to no vertex in $W$. By the union bound, the probability of this is at most

	\begin{equation*}
  \begin{aligned}
	  &\sum_{k=\frac{10n}{\log n}}^{n/9}
      \sum_{s=\frac{10n}{\log n}}^{n/9}
      \binom{n}{k} \binom{n}{s} \binom{n}{0.6n}
      \cdot \left( (1-p)^k + (1-p)^s \right) ^{0.6n} \\
	  &\leq n^2\cdot \left( 9e \right)^{2n/9}
      \cdot (2e)^{0.6n}
      \cdot 2^{0.6 n}
      \cdot \exp \left( -\frac{10n}{\log n}p\cdot 0.6n \right) \\
	  &\leq \exp \left(
        \left( o(1) + \log (9e)/2 + 2\log 2 + 1 - 10 \right) \cdot 0.6 n
      \right)
	  = o(1),
  \end{aligned}
	\end{equation*}
  and the statement follows.
\end{proof}

\begin{proof}[Proof of \ref{P:pseudorandom}]
If $U,W\subseteq V(G)$ are disjoint, and $|U|,|W| \geq \gamma n/25000$, then by \cref{thm:chernoff}, the probability that $e(U,W) < \frac{1}{2}|U||W|\log n/n \leq \frac{2}{3}\E (e(U,W))$ is of order
\[
  \exp \left( -\Omega(\E (e(U,W)))\right)
    = \exp \left(-\Omega (|U||W|\log n /n) \right)
    = \exp \left(-\Omega (n\log n)\right) .
\]
By the union bound, the probability that such $U,W$ exist is at most
\[
  3^n\cdot  \exp (-\Omega (n\log n)) = o(1).\qedhere
\]
\end{proof}

The following lemma describes yet another property of $G(n,p)$, which might need further explanation.
Recall that $G\sim G(n,p)$ is not assumed to be Hamiltonian, and in the relevant regime typically does not contain a Hamilton path.
We will need, however, to find a path (and, in fact, many such paths) which spans a large predetermined portion of its vertices.
It is not hard to see that subgraphs spanned by carefully chosen (large) sets of vertices are (very) good expanders.
Our way to argue that they are Hamiltonian, however, will be to show that they contain {\em sparser} expanders.
To this end, we will use the method of ``random sparsification'' which has become a fairly standard tool in the study of Hamiltonicity (see, e.g., in~\cites{BKS11mb,BKS11p,Kri16,FKMP18}).
The main idea behind this, which is the essence of the next lemma, is that we can show that \whp{} {\em every} sparse expander will have a relative booster.

\begin{lemma}\label{lem:boosters}
  Let $1\ll f(n)\ll \log\log{n}$, $p=(\log{n}+f(n))/n$ and $G\sim G(n,p)$.
  Then, \whp{},
  for every $W \subseteq V(G)$ of size $|W|=h \geq n/10$ and for every $(h/4,2)$-expander $H$ on $W$ which is a subgraph of $G$ with at most $\gamma n \log n/100$ edges, $G$ contains a booster with respect to $H$.
\end{lemma}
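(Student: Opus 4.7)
The plan is a direct first-moment union bound, without invoking random sparsification. First, I would fix a set $W \subseteq V(G)$ of size $h \in [n/10, n]$ and a graph $H$ on $W$ with $m$ edges. Since boosters with respect to $H$ are non-edges of $H$ inside the complete graph on $W$, the events $\{H \subseteq G\}$ and ``$G$ contains no booster w.r.t.\ $H$'' depend on disjoint subsets of potential edges and are therefore independent:
\[
  \pr\bigl(H \subseteq G \text{ and $G$ contains no booster w.r.t.\ $H$}\bigr) = p^{m} (1-p)^{|B(H)|},
\]
where $B(H)$ denotes the set of boosters of $H$. Any $(h/4,2)$-expander on $h \geq n/10$ vertices is connected (as noted after \cref{lem:posa:boosters}); \cref{lem:posa:boosters} then gives $|B(H)| \geq (h/4+1)^2/2 \geq h^2/32$ when $H$ is non-Hamiltonian, while in the Hamiltonian case every non-edge inside $W$ is a booster and $|B(H)| \geq \binom{h}{2} - m \geq h^2/32$ holds trivially since $m \leq M := \gamma n \log n/100 \ll h^2$.

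With $p \geq \log n/n$ this produces the saving $(1-p)^{|B(H)|} \leq \exp(-n \log n / 3200)$. The next step is to sum over all candidate pairs $(W,H)$:
\[
  \pr(\text{bad}) \leq \sum_{h=n/10}^{n} \binom{n}{h} \sum_{m=1}^{M} \binom{\binom{h}{2}}{m} p^m \cdot \exp\!\Bigl(-\tfrac{n\log n}{3200}\Bigr).
\]
Since $M$ is much less than $\binom{h}{2} p$, the summand in the inner sum is increasing in $m$ on $[1, M]$, so the inner sum is dominated (up to a factor of $M$) by its value at $m = M$; bounding $\binom{\binom{h}{2}}{M} p^M \leq (e h^2 p/(2M))^M \leq (100 e/\gamma)^M$ gives an entropy contribution of at most $\exp\!\bigl((\gamma/100)\log(100e/\gamma) \cdot n\log n\bigr)$. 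With $\gamma = 10^{-4}$ this is $\exp(O(10^{-5}\, n\log n))$, comfortably overwhelmed by the Pósa saving. The outer factor $\binom{n}{h} \leq 2^n$ and the polynomial factors from summing over $h$ and $m$ are negligible at the $n \log n$ scale, so the total is $\exp(-\Omega(n\log n)) = o(1)$.

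The main obstacle is purely numerical: one must verify that the count of candidate sparse expanders on a fixed $W$, roughly $\exp(\Theta(\gamma n \log n \cdot \log(1/\gamma)))$, is small enough to be defeated by the Pósa saving $\exp(-\Omega(n\log n))$. This is precisely why $\gamma$ was fixed to be a small absolute constant at the outset; with $\gamma = 10^{-4}$ the direct union bound is comfortable, and no conditioning-on-$G_1$-then-expose-$G_2$ argument is required for this particular lemma.
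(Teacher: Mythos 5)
Your proposal is correct and follows essentially the same route as the paper: a direct first-moment union bound over all candidate expanders $H$ on a candidate set $W$, using Pósa's booster count ($\ge (h/4+1)^2/2 \ge n^2/3200$) and the fact that the entropy $\exp\bigl(O(\beta\log(1/\beta))\,n\log n\bigr)$ from counting sparse graphs (with $\beta=\gamma/100=10^{-6}$) is overwhelmed by the $\exp(-n\log n/3200)$ saving. The only (welcome) additions are your explicit treatment of the case where $H$ is already Hamiltonian, which the paper's appeal to \cref{lem:posa:boosters} implicitly skips, and counting candidate $H$'s via $\binom{\binom{h}{2}}{m}$ rather than the paper's cruder $\binom{n^2}{m}$, neither of which changes the argument in substance.
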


\noindent For later references we will name the property described above
{\bf \namedlabel{P:boosters}{(P9)}}.

\begin{proof}
  Fix $W\subseteq V(G)$ such that $h:=|W|\ge n/10$, and let $H$ be an $(h/4,2)$-expander on $W$ with $m$ edges, that is a subgraph of $G$.
  Recall that $H$ is connected.
  Hence, by \cref{lem:posa:boosters} we know that $G[W]$ has at least $n^2/3200$ boosters with respect to $H$.
  Thus, the probability that $G[W]$ contains $H$ but no booster thereof is at most
  \[
    p^m \cdot (1-p)^{n^2/3200}
      \le p^m \cdot \left(1-\frac{\log{n}}{n}\right)^{n^2/3200}
      \le \left(\frac{2\log{n}}{n}\right)^m
        \cdot \exp\left(-n\log{n}/3200\right).
  \]
  Write $\beta=\gamma/100$.
  As there are at most $2^n$ choices for $W$ and at most $\binom{n^2}{m}\le\left(en^2/m\right)^m$ choices for $H$ for each $1\le m\le \beta n\log{n}$, we have, by the union bound, that the probability that there exist such $W$ and $H$ for which $G[W]$ does not contain a booster with respect to $H$ is at most
  \begin{equation}\label{eq:lem:boosters}
    2^n \cdot \exp\left(-n\log{n}/3200\right)
      \cdot \sum_{m=1}^{\beta n\log{n}} \left(\frac{2en\log{n}}{m}\right)^m.
  \end{equation}
  Set $g(m)=(2en\log{n}/m)^m$ and observe that $g'(m)=g(m)\cdot(\log(2en\log{n}/m)-1)$, which is positive for $1\le m\le \beta n\log{n}$.
  Thus, the sum in \eqref{eq:lem:boosters} can be bounded from above by
  \[
    \beta n\log{n}\cdot \left(2e/\beta\right)^{\beta n\log{n}}
    = \exp\left( (\beta\log(2e/\beta) + o(1))n\log{n} \right),
  \]
  which, recalling that $\beta=\gamma/100=10^{-6}$, is smaller than $\exp(n\log{n}/3201)$,
   and thus \eqref{eq:lem:boosters} tends to $0$ as $n\to\infty$.
\end{proof}

\section{Constructing a KeyChain}
\label{sec:const}
In \cref{sec:random} we have identified useful properties which are satisfied by random graphs \whp{}.
In this section we will assume our graph possesses these properties, and show that this deterministically implies that the graph contains a KeyChain.

For convenience, let us repeat some definitions from \cref{sec:intro,sec:random}.
Consider the following sequence: $a_1 = 1,\ a_{j+1} = \ceil{a_j \cdot \log n / 100}$, and set $j_0$ to be the minimum index $j$ for which $a_j \ge 10n / \log n$.
Let further
\[
t:=\floor{\log{n}}\quad\text{and}\quad \ell=2j_0.
\]
Finally, recall that $\gamma=10^{-4}$.
In this section we prove the following lemma, which, when put together with \cref{lem:gnp:prop,lem:boosters}, completes the proof of \cref{thm:kc}.

\begin{lemma}\label{lem:kc}
  Any $n$-vertex graph $G$ satisfying Properties \ref{P:maxdeg}--\ref{P:boosters}
  contains $\KC(n,t,\ell)$ as a subgraph.
\end{lemma}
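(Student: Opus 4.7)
The plan is to construct the KeyChain in four stages: (i) pick the $t$ keys together with their attachment points; (ii) reserve a ``transit'' through each remaining small-degree vertex; (iii) connect consecutive attachment points by internally disjoint paths of length $\ell$, forming the comb; and (iv) close the comb into a cycle by a Hamilton path through all remaining vertices.

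\emph{Stages (i)--(ii): keys and transits.} By \ref{P:numkeys}, $|D_1|\le t\le|D_2|$, so we may fix a key set $K$ of size $t$ with $D_1\subseteq K\subseteq D_1\cup D_2$, and for each $v\in K$ a designated attachment neighbour $v^\ast\in N(v)$; set $K^\ast=\{v^\ast:v\in K\}$. For each $v\in R:=\Small\setminus K$, pick two distinct neighbours $u_1(v),u_2(v)$; the two-edge path $u_1(v)\,v\,u_2(v)$ is the \emph{transit} of $v$. By \ref{P:smalldist} the vertices of $\Small$ are pairwise at graph distance $>L:=0.2\log n/\log\log n$, so the attachment points are distinct, lie outside $\Small$, and distinct transits are vertex-disjoint and far apart. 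By \ref{P:smallsmall}, the combined footprint of the small-degree vertices is at most $n^{0.6}$.

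\emph{Stage (iii): comb.} Fix any ordering $K^\ast=\{v_1^\ast,\ldots,v_t^\ast\}$ and, for $i=1,\ldots,t-1$ in turn, greedily construct a path $P_i$ of length exactly $\ell=2j_0$ from $v_i^\ast$ to $v_{i+1}^\ast$, internally disjoint from $K\cup\Small\cup N(\Small)\cup V(P_1)\cup\cdots\cup V(P_{i-1})$. To build $P_i$, run BFS simultaneously from both endpoints inside the surviving graph. Every vertex outside $\Small$ has degree at least $\log n/10$, and by \ref{P:local:sparse} the interior of any small BFS ball is sparse, so \ref{P:smallsets} yields $|N(U)|\ge|U|\log n/20$ for every encountered set $U$ of size at most $10n/\log n$. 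By induction, the BFS ball at depth $j$ from each endpoint has size at least $a_j$ and, at depth $j_0$, size at least $10n/\log n$. Applying \ref{P:intersect} to the two depth-$j_0$ layers, their neighbourhoods share at least $n/9$ common vertices, and any such common neighbour yields a path of length exactly $2j_0=\ell$ (the parity is built into the definition of $\ell$). Since the whole comb uses $O(t\ell)=O(\log^2 n/\log\log n)=o(n)$ vertices, the greedy construction has ample room.

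\emph{Stage (iv): closing.} Let $W$ be the union of $\{v_1^\ast,v_t^\ast\}$ with all vertices not in $K$ and not in the interior of any $P_i$; we seek a Hamilton $v_1^\ast v_t^\ast$-path of $G[W]$ that uses, for each $v\in R\cap W$, exactly the edges $vu_1(v),vu_2(v)$. Form the auxiliary graph $H$ on $W\setminus R$ by contracting every reserved transit $u_1(v)\,v\,u_2(v)$ to the single edge $u_1(v)u_2(v)$; a Hamilton $v_1^\ast v_t^\ast$-path of $H$ lifts uniquely to the desired path in $G[W]$. Using \ref{P:maxdeg}, \ref{P:smalldist}, \ref{P:local:sparse} and \ref{P:pseudorandom} with $d=\log n/10$ and $m=\gamma n/25000$, $H$ satisfies the four hypotheses of \cref{lem:expander}, so $H$ is an $(|V(H)|/4,2)$-expander. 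A standard rotation-extension argument then produces the path: so long as $H$ has no Hamilton $v_1^\ast v_t^\ast$-path, \cref{lem:posa:boosters} provides $\Omega(n^2)$ booster edges for any current longest path, and \ref{P:boosters} ensures that at least one of them is present in $G$ and can be added to $H$; after at most $n$ such additions we reach a Hamilton cycle, which after a final rotation yields a Hamilton path from $v_1^\ast$ to $v_t^\ast$. Gluing this path to the comb produces the required copy of $\KC(n,t,\ell)$.

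\emph{Main obstacle.} The delicate point is Stage~(iii): obtaining paths of length \emph{exactly} $\ell$ rather than at most $\ell$. The choice $\ell=2j_0$ and the BFS-meets-BFS setup must be calibrated so that a common neighbour of two depth-$j_0$ spheres always gives a path of the right length and parity; the fast growth rate $\log n/100$ encoded in the recurrence for $a_j$ is what makes the spheres (not just the balls) large enough for \ref{P:intersect} to apply. A secondary subtlety is verifying the hypotheses of \cref{lem:expander} for the contracted graph $H$, which works because $|R\cup N(R)|\le n^{0.6}$ by \ref{P:smallsmall} ensures that contracting the transits perturbs the relevant counts only by a negligible amount.
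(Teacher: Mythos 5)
Your Stages (i)–(iii) are broadly in the spirit of the paper's comb construction (\cref{lem:paths}): a BFS-type layering from the attachment points, using \ref{P:smallsets}, \ref{P:local:sparse} and \ref{P:intersect} to grow the spheres until they are large enough to meet, with parity controlled by $\ell=2j_0$. (Minor bookkeeping: to invoke \ref{P:intersect} you must also bound the layer from above by $n/9$, and your ``depth-$j_0$'' indexing is off by one from the target path length $2j_0$; both are easily fixed.) The serious problems are in Stage (iv). Your contraction does not lift: a Hamilton $v_1^\ast v_t^\ast$-path of the auxiliary graph $H$ is under no obligation to use any particular edge, so if it avoids $u_1(v)u_2(v)$ the lifted path simply omits $v\in R$. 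Worse, $H$ is not a subgraph of $G$ — indeed by \ref{P:smalldist} the edge $u_1(v)u_2(v)$ \emph{cannot} belong to $G$, as it would put $v\in\Small$ on a triangle — so \ref{P:boosters}, which is stated for subgraphs of $G$ of at most $\gamma n\log n/100$ edges, does not apply to $H$ at all. And even putting the contraction aside, you never sparsify: your $H$ has essentially all of $G[W\setminus R]$'s edges, i.e.\ $\Theta(n\log n)$, vastly exceeding the $10^{-6}n\log n$ budget required by \ref{P:boosters}. The random-sparsification step of the paper's \cref{lem:ham} is essential, not optional.

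There is also a structural gap upstream of this. Your comb avoids only $\Small\cup N(\Small)$, a set of size $O(n^{0.6})$, but nothing stops the comb from swallowing all (or all but one) neighbours of some $v\notin\Small$ of degree barely above $\log n/10$, leaving $v$ with degree $\le 1$ in $G[W]$ and killing Hamiltonicity outright. The paper's preparatory step (\cref{lem:partition}, via the Local Lemma) resolves precisely this: it carves out two linear-size reservoirs $U_1,U_2$ such that every $v\notin\Small$ keeps $\Omega(\log n)$ neighbours in each $U_i$, and places all of $\Small\setminus K$ \emph{with its entire neighbourhood} inside $U_1$. The comb is then built wholly inside $V'=V\setminus(U_1\cup U_2)$, so those reserved degrees survive untouched, yielding the degree hypotheses of \cref{lem:ham} on each half $W_1\supseteq U_1$, $W_2\supseteq U_2$ — and this split into two halves, joined by \ref{P:pseudorandom}, is also how the paper obtains a Hamilton path with \emph{both} endpoints prescribed, a point your single rotation-extension sweep does not actually deliver. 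Your proposal has no analogue of this reservation, and I do not see how Stage~(iv) can be repaired without introducing one.
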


Our plan is as follows.
We want to construct a ``comb'', which consists of $t$ keys (all vertices of degree $1$ in addition to some vertices of degree $2$), and equal-length paths between neighbours of consecutive keys.
We will then want to connect the endpoints of the comb by a path which spans the remaining set of vertices.
As hinted in the introduction, we cannot carelessly do so, as some vertices outside the comb might have most or all of their neighbours inside the comb.
Instead, we have to make a preparatory step, in which we put aside vertices of small degree (except the future ``keys'' of the KeyChain) along with their neighbourhoods.
This preparatory step is \cref{lem:prep}.
Given the partition in \cref{lem:prep}, we construct a comb in its large part (in \cref{lem:paths}), and then connect the endpoints with a path that spans the remaining set of vertices.
This is depicted in \cref{fig:hamkc}.

~

In the next lemmas we assume $G$ is an $n$-vertex graph satisfying Properties \ref{P:maxdeg}--\ref{P:boosters}.

\begin{lemma}\label{lem:prep}
  There exist a partition $V(G)=V^\star\cup V'$ with $|V^\star|\sim 2\gamma n$ and a set $K\subseteq V'$ with $|K|=t$ for which the following holds:
  \begin{description}
  \item[(a)] $D_1(G)\subseteq K\subseteq D_{\le 2}(G)$;
  \item[(b)] $N(K)\subseteq V'$;
  \item[(c)] $d(v,V^\star)\le 200\gamma\log{n}$ and $d(v,V')\ge \log{n}/20$ for every $v\in V'\sm K$;
  \item[(d)] If $K\subseteq X\subseteq V'$ satisfies $|X|\le n/2$ and $w_1,w_2\in X\sm K$ then there exist $z_1\sim w_1$ and $z_2\sim w_2$ in $V(G)\sm X$, and a Hamilton path from $z_1$ to $z_2$ in $G[V(G)\sm X]$.
  \end{description}
\end{lemma}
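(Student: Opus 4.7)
I would set $K := D_1(G) \cup K_2$ with $K_2 \subseteq D_2(G)$ chosen so that $|K| = t$, which is possible by \ref{P:numkeys}. Let $S := \Small \sm K$ and, for each $v \in S$, designate two arbitrary neighbours of $v$, collecting them into a set $N^+$. By \ref{P:smalldist} these neighbours all lie in $U := V(G) \sm (K \cup N(K) \cup S)$, since any neighbour of a $\Small$-vertex is neither in $\Small$ nor in $N(K)$. Now form $V^\star_0 \subseteq U \sm N^+$ by including each vertex independently with probability $2\gamma$, and set $V^\star := V^\star_0 \cup S \cup N^+$, $V' := V(G) \sm V^\star$. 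Since $|S \cup N^+| = O(n^{0.6})$ by \ref{P:smallsmall}, one has $|V^\star| \sim 2\gamma n$ whp over the random draw.

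\textbf{Verification of (a)--(c).} Item (a) holds by construction. For (b), the set $V^\star$ is disjoint from $N(K)$: $V^\star_0 \cap N(K) = \es$ by choice of $U$, while $(S \cup N^+) \cap N(K) = \es$ by \ref{P:smalldist}, which makes $\Small$ independent and forces any two $\Small$-vertices to have disjoint neighbourhoods. For (c), every $v \in V' \sm K = V(G) \sm (\Small \cup V^\star_0 \cup N^+)$ lies outside $\Small$, so $d(v) > \log n / 10$. Writing $d(v, V^\star) = d(v, V^\star_0) + d(v, S) + d(v, N^+)$, the last two terms are each at most $1$ because, by \ref{P:smalldist}, two $\Small$-vertices are at distance $\ge 5$ in $G$, so $v \notin \Small$ has at most one neighbour in $\Small$ and at most one neighbour in $N(\Small)$; a Chernoff estimate plus a union bound over $v$ gives $d(v, V^\star_0) \le 199\gamma \log n$ whp. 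Combined, $d(v, V^\star) \le 200\gamma \log n$, and hence $d(v, V') \ge \log n / 20$.

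\textbf{Verification of (d) --- the main obstacle.} Fix $X$ and $w_1, w_2 \in X \sm K$ and set $Z := V(G) \sm (X \sm \{w_1, w_2\}) = Y \cup \{w_1, w_2\}$. The key observation is that a Hamilton path in $G[Z]$ from $w_1$ to $w_2$ yields, upon deleting its endpoints, exactly the Hamilton path in $G[Y]$ required by (d). I would then attack this via the classical sparse-expander-plus-boosters strategy: first, using \cref{lem:expander}, exhibit a subgraph $H \subseteq G[Z]$ of at most $\gamma n \log n / 100$ edges which is a $(|Z|/4, 2)$-expander; second, absorb boosters supplied by \ref{P:boosters} one at a time until $H$ becomes Hamiltonian; third, apply P\'osa's rotation--extension on the resulting Hamiltonian expander to bend the Hamilton cycle into a Hamilton path from $w_1$ to $w_2$.

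The main obstacle is verifying the hypotheses of \cref{lem:expander} for $G[Z]$ uniformly over all admissible $X$. The careful design of $V^\star$ is tailored to this: since $X \subseteq V'$ we have $V^\star \subseteq Z$, and the construction arranges $d(v, V^\star) \ge 2$ for every $v \in V(G) \sm K$ (deterministically through $N^+$ for $\Small$-vertices, and whp via Chernoff for all others), which secures $\delta(G[Z]) \ge 2$. Conditions 3 and 4 of \cref{lem:expander} are inherited from \ref{P:local:sparse} and \ref{P:pseudorandom} applied within $Z$. Condition 2 --- separation of low-degree vertices of $H$ --- is the most delicate step: it requires choosing $H$ so that its low-degree vertices are essentially those of $\Small \cap Z$, which are mutually far apart by \ref{P:smalldist}, while respecting the edge-count budget of \ref{P:boosters}. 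The final conversion from Hamilton cycle to Hamilton path with prescribed endpoints is a standard but intricate application of rotation--extension, possibly requiring an additional round of booster absorption.
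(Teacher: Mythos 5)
Your proposal diverges from the paper's argument in two places where the gap is genuine, not merely cosmetic.

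The more fundamental gap is in step \textbf{(d)}. You reduce to producing a Hamilton path in $G[Z]$ \emph{with both endpoints prescribed} ($w_1$ and $w_2$). The sparse-expander-plus-boosters machinery (\cref{lem:expander}, \cref{lem:posa:boosters}, \ref{P:boosters}) naturally delivers a Hamilton cycle, and P\'osa rotation with one endpoint held fixed delivers a Hamilton path with one end at $w_1$ and linearly many \emph{possible} second endpoints; it does not, on its own, place the second endpoint at a specific vertex $w_2$. Your closing step (``bend the Hamilton cycle into a Hamilton path from $w_1$ to $w_2$'') glosses over exactly this: rotations keep one endpoint fixed and there is no reason the set $R$ of reachable second endpoints contains $w_2$. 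The usual workaround of adjoining a dummy vertex $x$ adjacent only to $w_1,w_2$ and Hamiltonizing $G[Z]+x$ is also blocked here, since $x$ would be a degree-$2$ vertex possibly sitting in a $3$- or $4$-cycle (if $w_1,w_2$ are adjacent or close), violating Condition~2 of \cref{lem:expander}. The paper sidesteps the prescribed-endpoints problem entirely: \cref{lem:partition} produces \emph{two} disjoint reserve sets $U_1,U_2$; in the proof of \textbf{(d)} one splits $V'\sm X$ into halves $V''_1,V''_2$, sets $W_i=V''_i\cup U_i$, invokes \cref{lem:ham} in each $W_i$ to get a linear-size set $R_i$ of Hamilton-path second endpoints rooted at $z_i$, and then uses \ref{P:pseudorandom} to find an edge between $R_1$ and $R_2$ that concatenates the two paths. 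This two-reserve-set structure is the key design choice, and your single random set $V^\star_0$ does not replicate it.

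The second gap is probabilistic. You build $V^\star_0$ by independent $\Bernoulli(2\gamma)$ sampling and then try to control $d(v,V^\star_0)$ for every $v$ via Chernoff plus a union bound. With $\gamma=10^{-4}$ and $d(v)=\Theta(\log n)$, the mean of $d(v,V^\star_0)$ is $\Theta(\gamma\log n)$, so a constant-factor deviation has probability only $n^{-\Theta(\gamma)}$ — vastly larger than $1/n$ — and the union bound over $n$ vertices fails, both for the lower bound $d(v,V^\star_0)\ge 2$ (and $\ge \gamma\log n/100$, which you also implicitly need) and for the upper bound $d(v,V^\star_0)\le 199\gamma\log n$. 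This is precisely why the paper's \cref{lem:partition} uses the symmetric Lov\'asz Local Lemma rather than a union bound: the bad events $\cB_v$ have the same small (but not $o(1/n)$) failure probability, yet each is mutually independent of all but $O(\log^2 n)$ others (vertices ``related'' through shared blobs), so the Local Lemma yields a valid choice of $U_1',U_2'$ where a union bound cannot.
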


Here, $K$ will be the set of keys for our future construction, and $V'$ will host the comb, with conditions {\bf (b)},{\bf (c)} ensuring that its construction is indeed possible. Finally, condition {\bf (d)} ensures that the comb can be extended into a copy of $\KC(n,t,\ell)$ by plugging it as $X$ and the two endpoints of the comb's path as $w_1,w_2$.

The proof of \cref{lem:prep} is based on two ingredients.

The first ingredient (\cref{lem:partition}) takes care of the actual partition promised by \cref{lem:prep}. In this step we take measures to ensure that our partition satisfies the desired conditions. In particular, vertices of $\Small \setminus K$, and their neighbours, are placed in $V^\star$. This serves a dual purpose: we ensure that the minimum degree of the graph spanned by $V'$ is at least logarithmic, thus aiding us with the construction of the comb, and that the minimum degree after removing the comb is at least 2, which is a necessary condition for the completion of the comb into $\KC(n,t,\ell)$.

The second ingredient (\cref{lem:ham}), which is the core of the proof, gives {\bf (d)}, by showing that inside these ``well-prepared'' sets, one can find Hamilton paths with linearly many distinct endpoints, emerging from a given vertex.

\begin{lemma}\label{lem:partition}
  There exist disjoint sets $K,U_1,U_2\subseteq V(G)$ with $|K|=t$ and $|U_1|,|U_2|\sim\gamma n$ for which the following holds.
  Write $V'=V(G)\sm(U_1\cup U_2)$.
  Then
  \begin{description}
      \item[(a)] $D_1(G)\subseteq K\subseteq D_{\le 2}(G)$;
      \item[(b)] For every $v\in K$, $N(v)\subseteq V'\sm K$;
      \item[(c)] If $v\notin\Small$ then $\gamma\log{n}/100 \le d(v,U_1),d(v,U_2)\le 100\gamma \log n$ and $d(v,V')\ge \log{n}/20$;
      \item[(d)] If $v\in\Small\sm K$ then $v$ and all of its neighbours are in $U_1$.
  \end{description}
\end{lemma}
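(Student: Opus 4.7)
The plan is to combine a few forced placements with a random assignment of the remaining vertices and then verify all the required properties, using the Lovász local lemma to handle those vertices for which a Chernoff-plus-union-bound argument is too weak. First I would define $K$: take $K \supseteq D_1(G)$ and augment it arbitrarily from $D_2(G)$ until $|K| = t$, which is feasible by \ref{P:numkeys}; this gives (a). Next, I would force $N(K) \subseteq V'$ (establishing (b)) and $M := (\Small \setminus K) \cup N(\Small \setminus K) \subseteq U_1$ (establishing (d)). These forced placements are mutually consistent: since $K \subseteq \Small$, a neighbour of $K$ cannot itself be small (that would yield a length-$1$ path between two small vertices, forbidden by \ref{P:smalldist}), nor can it lie in $N(\Small \setminus K)$ (a length-$2$ path between two small vertices, also forbidden). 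By \ref{P:smallsmall}, $|M| \le n^{0.6} = o(\gamma n)$, and $|N(K)| = O(\log n)$, so the forced placements occupy only a negligible portion of $V(G)$.

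For the remaining vertices $R := V(G) \setminus (K \cup N(K) \cup M)$, I would assign each $u \in R$ independently to $U_1$, $U_2$, or $V'$ with probabilities $p_1, p_2, 1 - p_1 - p_2$ chosen so that $\E |U_i| = \gamma n$, hence $p_1, p_2 = \gamma (1 + o(1))$. The key structural observation is that \ref{P:smalldist} forces $|N(v) \cap (\Small \cup N(\Small))| \le 1$ for every $v \notin \Small$: $v$ has at most one small neighbour $s^\star$ (else a length-$2$ path between small vertices); and any common neighbour of $v$ and $s^\star$ would close a triangle through $s^\star$ (if $v \sim s^\star$) or a $4$-cycle through $s^\star$ (if $v \not\sim s^\star$), both ruled out by \ref{P:smalldist}. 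In particular $d(v, U_1 \setminus R) \le 1 \ll 100 \gamma \log n$, and $|N(v) \cap R| \ge d(v) - O(1)$. Combined with \ref{P:maxdeg} and $d(v) > \log n / 10$, \cref{thm:chernoff} concentrates $d(v, U_i \cap R)$ around its mean $\gamma d(v) (1 - o(1))$, giving both inequalities of (c) and the bound $d(v, V') \ge \log n / 20$, with failure probability $\exp(-\Omega(\gamma d(v)))$.

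The main obstacle is extracting a single assignment that satisfies (c) for every $v \notin \Small$ simultaneously: for $v$ whose degree is just above $\log n / 10$, Chernoff only yields $\pr(B_v) \le n^{-\Omega(\gamma)}$, where $B_v$ denotes the failure of (c), and a union bound over $n$ vertices is too weak. I would resolve this by invoking the Lovász local lemma: $B_v$ depends only on the random assignment of $N(v) \cap R$, hence on $B_w$ only when $v$ and $w$ are at graph-distance at most $2$; by \ref{P:maxdeg}, the number of such $w$ is $O(\log^2 n)$. Since $e \cdot \pr(B_v) \cdot O(\log^2 n) = n^{-\Omega(\gamma)} \cdot \polylog(n) \to 0$, the symmetric LLL produces, with positive probability, an assignment avoiding every $B_v$. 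A straightforward correction step moving $o(n)$ vertices between parts then secures the exact size requirements $|U_1|, |U_2| \sim \gamma n$, yielding a partition that meets all of (a)--(d).
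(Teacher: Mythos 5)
Your approach is a genuine variant of the paper's: both hinge on the symmetric Lov\'asz local lemma to obtain sets $U_1,U_2$ such that every $v\notin\Small$ has degree $\Theta(\gamma\log n)$ into each, and both handle \textbf{(a)}, \textbf{(b)}, \textbf{(d)} by hard-coding the fate of $K$, $N(K)$ and $\Small\cup N(\Small)$ before (or after) the random step. The difference is in the randomisation. The paper partitions $V(G)$ into $r\sim\gamma n$ ``blobs'' of size $1/\gamma$ and picks a uniformly random ordered pair from each; this makes $|U_1'|=|U_2'|=r$ hold \emph{deterministically}, while the dependency degree of the bad events (through shared blobs) is still $O(\log^2 n)$. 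You instead assign the free vertices i.i.d.\ to $U_1,U_2,V'$, which gives a cleaner dependency graph (through shared neighbours, degree $O(\log^2 n)$ by \ref{P:maxdeg}) but trades away the automatic size control. Your observations that \ref{P:smalldist} forces $|N(v)\cap(\Small\cup N(\Small))|\le 1$ for $v\notin\Small$, that the forced placements are consistent, and the Chernoff estimate $\pr(B_v)\le n^{-\Omega(\gamma)}$ feeding into the symmetric LLL, are all correct.

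The one real gap is the closing sentence. ``A straightforward correction step moving $o(n)$ vertices between parts'' is not straightforward: the existential LLL only hands you \emph{some} assignment avoiding all $B_v$, with no control on $|U_1|,|U_2|$ for that particular outcome (they could a priori be $\Theta(n)$ off), and moving vertices afterwards can push $d(v,U_i)$ below $\gamma\log n/100$ for some $v$, breaking \textbf{(c)}. The clean fix is not a post-hoc correction but the quantitative form of the LLL: with dependency degree $d=O(\log^2 n)$ one gets $\pr\bigl(\bigcap_v\overline{B_v}\bigr)\ge(1-\tfrac{1}{d+1})^n=\exp(-O(n/\log^2 n))$, whereas the size-deviation event $\{\,\bigl||U_i|-\gamma n\bigr|>\eps n\,\}$ has probability $\exp(-\Omega(n))$ by Chernoff, so an outcome satisfying both exists. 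This is precisely the subtlety the paper's blob construction is designed to sidestep: exact sizes come for free, so no quantitative LLL bound is needed. With that one step repaired, your argument goes through.
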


\begin{proof}
  The proof involves an application of the symmetric form of the Local Lemma (see, e.g.,~\cite{AS}*{Chapter~5}; a similar application appears in~\cite{HKS12} and in~\cite{GKM21}).
  Write $V=V(G)$, $\alpha=1/10$, $s=1/\gamma$. Let $r=\floor{n/s}\sim\gamma n$ and let $A_1,\ldots,A_r,Z$ be a partitioning of the vertices of $G$ into $r$ ``blobs'' $A_i$ of size $s$ and an extra set $Z$ with $0\le |Z|< s$.
  For $j\in[r]$ let $(x_j^1,x_j^2)$ be a uniformly chosen pair of distinct vertices from $A_j$.
  For $i=1,2$ define $U_i'=\{x_j^i\}_{j=1}^r$.
  Clearly, $|U_1'|=|U_2'|=r$ and $U_1'\cap U_2'=\es$.
  For every $v\notin\Small$ let $\cB_v^-$ be the event that $d(v,U_i') < 2\gamma\alpha^2\log{n}$ for some $i=1,2$, and let $\cB_v^+$ be the event that $d(v,U_i') > \gamma\alpha^{-2}\log{n}/2$ for some $i=1,2$.
  For such $v$, let $L(v)$ be the set of blobs that contain neighbours of $v$,
  namely, $L(v)=\{A_j:\ N(v)\cap A_j\ne\es\}$.
  For $j\in[r]$ write $n_j(v)=|N(v)\cap A_j|$,
  and note that $\sum_j n_j(v) \ge d(v)-s \ge \alpha\log{n}/2$.
  For $i=1,2$ and $j\in[r]$ let $\chi_j^i(v)$ be the indicator of the event that $x_j^i$ is a neighbour of $v$, and note that $\E\chi_j^i(v)=\gamma n_j(v)$.
  Observe that for $i=1,2$, $d(v,U_i') = \sum_j \chi_j^i(v)$, hence $\E[d(v,U_i')]=\gamma\sum_j n_j(v)\ge\gamma\alpha\log{n}/2$.
  Thus, by \cref{thm:chernoff}, $\pr(\cB_v^-)\le n^{-c_-}$ for some $c_->0$.
  Similarly, by \ref{P:maxdeg}, $\E[d(v,U_i')]=\gamma\sum_j n_j(v)\le \gamma\alpha^{-1}\log{n}$.
  Thus, by \cref{thm:chernoff}, $\pr(\cB_v^+)\le n^{-c_+}$ for some $c_+>0$.
  We conclude that for $\cB_v=\cB_v^-\cup\cB_v^+$ we have $\pr(\cB_v)\le n^{-c}$ for some $c>0$.

  For two distinct vertices $u,v\notin\Small$ say that $u,v$ are \defn{related} if $L(u)\cap L(v)\ne\es$.
  For a vertex $u\notin\Small$ let $R(u)$ be the set of vertices in $V\sm\Small$ which are related to $u$, and note that $|R(u)|\le s\Delta(G)^2$, which is, by \ref{P:maxdeg}, at most $C\log^2{n}$ for some $C>0$.
  Note that $\cB_u$ is mutually independent of the set of events $\{\cB_v\mid v\in (V\sm\Small)\sm R(u)\}$.
  We now apply the symmetric case of the Local Lemma: observing that $e n^{-c} \cdot C\log^2{n}<1$ (for large enough $n$), we get that with positive probability, none of the events $\cB_v$ occur.
  We choose $U_1',U_2'$ to satisfy this.

  Choose a set $K$ of size $t$ arbitrarily to satisfy \textbf{(a)}; this is possible due to \ref{P:numkeys}.
  Write $K^+=K\cup N(K)$ and $S^+=\Small\cup N(\Small)\supseteq K^+$.
  Note that $|K^+|\le 3\log{n}$ by~\ref{P:numkeys} and that $|S^+|\le n^{0.6}$ by~\ref{P:smallsmall}.
  Define $U_1=(U_1'\cup S^+)\sm K^+$ and $U_2=U_2'\sm S^+$ and observe that $|U_1|,|U_2|\sim \gamma n$.
  Due to \ref{P:smalldist}, $N(\Small\sm K)\subseteq S^+\sm K^+$, hence
  the construction satisfies \textbf{(d)}.
  Let $v\notin\Small$.
  The fact that $G$ satisfies \ref{P:smalldist} implies that $v$ has at most $1$ neighbour in $S^+$.
  Thus, for every $v\notin\Small$ and $i=1,2$ it holds that $|d(v,U_i)-d(v,U_i')|\le 1$, and, in addition, $d(v,V')\ge d(v,V\sm(U_1'\cup U_2'))-1 \ge \alpha\log{n}/2$, hence \textbf{(c)} is satisfied.
  By the discussion above and by \ref{P:smalldist}, \textbf{(b)} is also satisfied.
  We have thus proved the statement.
\end{proof}

\begin{lemma}\label{lem:ham}
  If $W\subseteq V(G)$ is a vertex subset such that $|W| \geq n/10$, $\delta(G[W])\ge 2$, and such that for every $v\in W$ we have $d(v,W)\ge \min\{ d(v), \gamma \log n/100\}$,
  then for every $w \in W$ there exists $R\subseteq W$ with $|R|\ge n/40$ such that for each $y\in R$, there is a Hamilton path in $G[W]$ whose endpoints are $w$ and $y$.
\end{lemma}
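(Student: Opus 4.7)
The plan has three parts: first, verify that $G[W]$ is itself a $(h/4,2)$-expander, where $h=|W|$; second, use \ref{P:boosters} to find a Hamilton cycle in $G[W]$; third, extract many Hamilton paths starting at $w$ via Pósa rotations. For the first part, I apply \cref{lem:expander} to $G[W]$ with $m=\gamma n/25000$ and $d=\gamma\log n/100$ (noting $h\ge n/10\ge 4m$). Condition (1) is the hypothesis. For condition (2): if $v\in W$ satisfies $d_{G[W]}(v)<d$ then the lemma's hypothesis forces $d_G(v)=d_{G[W]}(v)<d\le\log n/10$, so $v\in\Small$, whence by \ref{P:smalldist} any two such vertices are at distance at least $5$ apart and none lies on a $3$- or $4$-cycle. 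Conditions (3) and (4) are exactly \ref{P:local:sparse} and \ref{P:pseudorandom}.

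For the second part, I first exhibit a sparse $(h/4,2)$-expander $H_0\subseteq G[W]$ with $|E(H_0)|\le\gamma n\log n/200$. Such an $H_0$ can be produced by retaining every edge of $G[W]$ incident to $\Small\cap W$ (an $o(n)$ set, by \ref{P:smallsmall} and \ref{P:maxdeg}) and appropriately sparsifying the rest; its existence is established by a standard probabilistic argument (Chernoff plus union bound, using \ref{P:smalldist}, \ref{P:local:sparse} and \ref{P:pseudorandom}) showing that the four conditions of \cref{lem:expander} hold for $H_0$ with positive probability. Given $H_0$, while it is not Hamiltonian, \ref{P:boosters} supplies a booster $e\in E(G[W])\setminus E(H_0)$; I update $H_0\leftarrow H_0\cup\{e\}$ and repeat. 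Each booster either strictly lengthens the longest path of $H_0$ or completes a Hamilton cycle, so the process terminates within $n$ steps. Throughout, $H_0$ remains an $(h/4,2)$-expander (edges are only added) and its edge count stays below $\gamma n\log n/100$, so \ref{P:boosters} applies at every step. We conclude that $G[W]$ contains a Hamilton cycle $C$.

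For the third part, I delete an edge of $C$ incident to $w$ to obtain a Hamilton path $P$ of $G[W]$ with $w$ as one endpoint, and apply \cref{lem:Posa} to $P$ (which is trivially a longest path). The set $R$ of endpoints of Hamilton paths from $w$ reachable by rotations satisfies $|N_{G[W]}(R)|\le 2|R|-1$. Since $G[W]$ is a $(h/4,2)$-expander by the first part, the assumption $|R|\le h/4$ would force $|N_{G[W]}(R)|\ge 2|R|$, a contradiction. Hence $|R|>h/4\ge n/40$, and each $y\in R$ is the endpoint of a Hamilton path from $w$ in $G[W]$, yielding the lemma.

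The hard part will be the construction of the sparse expander $H_0$ in the second part: one must balance the sparsity required by \ref{P:boosters} against the expansion condition (4) of \cref{lem:expander}, a delicate quantitative trade-off since dropping the sparsification probability too far kills condition (4), while raising it too high violates the $\gamma n\log n/100$ edge bound needed for \ref{P:boosters}.
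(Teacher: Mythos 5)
Your overall architecture — random sparsification of $G[W]$ to obtain a sparse $(h/4,2)$-expander, iterated use of \ref{P:boosters} to build a Hamilton cycle in $G[W]$, then P\'osa rotations to get $\ge h/4$ endpoints — is exactly the paper's route, and your third part and the booster-iteration bookkeeping in your second part are correct. Two remarks.

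First, your ``first part'' (that $G[W]$ itself is a $(h/4,2)$-expander) is correct but redundant: once one has a sparse $(h/4,2)$-expander $H_0\subseteq G[W]$, the expansion of $G[W]$ is immediate since it is a supergraph of $H_0$ on the same vertex set. The paper uses exactly this shortcut and never applies \cref{lem:expander} to $G[W]$ directly.

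Second, the construction of $H_0$ — which you rightly flag as the hard part — is left as a sketch, and one point of your sketch is a bit off. The paper's sparsification is vertex-by-vertex rather than a Bernoulli thinning of the edge set: for each $v\in W$ with $d(v,W)>d_0:=\gamma\log n/100$, keep a uniformly random set of exactly $d_0$ of its $W$-incident edges, and for every $v$ with $d(v,W)\le d_0$ (which, by the hypothesis on $W$, forces $v\in\Small$) keep all of $E_G(v,W)$; then take the union $H=\bigcup_v E(v)$. This guarantees $\delta(H)\ge 2$ deterministically (a plain Bernoulli sparsification would not), and Conditions (2)–(3) of \cref{lem:expander} then hold deterministically, inherited from $G$ via \ref{P:smalldist} and \ref{P:local:sparse}. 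Only Condition (4) is probabilistic, and the bound used is not Chernoff but a hypergeometric-type ratio
\[
\pr\bigl(E(v)\cap E_G(v,F_2)=\es\bigr)\le\binom{d_G(v,W)-d_G(v,F_2)}{d_0}\Big/\binom{d_G(v,W)}{d_0}\le\exp\!\left(-\frac{\beta\, d_G(v,F_2)}{10}\right),
\]
followed by independence over $v\in F_1$, \ref{P:pseudorandom}, \ref{P:maxdeg}, and a union bound over the $\le 2^{2n}$ pairs $(F_1,F_2)$. Until this step is carried out, the proof has a gap — as you anticipated — but the gap is fillable in precisely the way you gesture at, with the above as the missing calculation.
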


\begin{proof}
  Write $\beta=\gamma/100$ and set $d_0=\beta\log{n}$.
  Let $W\subseteq V(G)$ satisfy $h:=|W| \geq n/10$ and $\delta(G[W])\ge 2$, and assume that for every $v\in W$ we have $d(v,W)\ge \min\{ d(v), \beta \log n\}$.
  We select a random edge subgraph $H$ of $G[W]$ as follows.
  For each $v\in W$, if $d(v,W)\le d_0$ set $E(v)=E_G(v,W)$;
  otherwise, namely if $d(v,W)>d_0$, then set $E(v)$ to be a (uniformly) selected set of $d_0$ random edges of $G[W]$ which are incident to $v$.
  Let $H=(W,E_H)$ with $E_H=\bigcup_{v\in W} E(v)$.
  Observe that $|E(H)|\le h\cdot d_0\le \beta n\log{n}/10$.

  We now show that $H$ is, with positive probability, a (connected) $(h/4,2)$-expander.
  Taking $d=d_0$ and $m=\beta n/250$, and noting that $h\ge n/10\ge 4m$, it is enough to show that $H$ satisfies, with positive probability, Conditions 1--4 in \cref{lem:expander}.
  For the first condition, note that $\delta(H)\ge \min\{d_0,\delta(G[W])\}\ge 2$.
  The second condition holds as it holds for $G$ by \ref{P:smalldist} (since $d\le \log{n}/10$), and clearly also for every subgraph thereof.
  Similarly, noting that $5m=\beta n/50$, the third condition holds as it holds for $G$ by \ref{P:local:sparse}.

  We move on the prove the fourth condition of \cref{lem:expander}.
  Let $F_1,F_2\subseteq W$ with $|F_1|,|F_2|\ge m$.
  By \ref{P:pseudorandom} we know that $|E(F_1,F_2)|\ge cn\log{n}$ for $c=10^{-6}\beta^2$.
  For $u\in F_1$ for which $d_G(u,F_2)\ge 1$ let $\cA_u$ be the event that none of the edges of $D(u)$ is incident to a vertex of $F_2$.
  By the construction of $H$, if $d_G(u,W)\le d_0$ then $\pr(\cA_u)=0$.
  On the other hand, if $d_G(u,W)>d_0$ then, using \ref{P:maxdeg},
  \[
  \begin{aligned}
    \pr(\cA_u)
    &\le \binom{d_G(u,W)-d_G(u,F_2)}{d_0} / \binom{d_G(u,W)}{d_0}
    = \prod_{i=0}^{d_0-1} \frac{d_G(u,W)-d_G(u,F_2)-i}{d_G(u,W)-i}\\
    &\le \left( 1-\frac{d_G(u,F_2)}{d_G(u,W)} \right)^{d_0}
     \le \left( 1-\frac{d_G(u,F_2)}{10\log{n}} \right)^{d_0}
     \le \exp\left( -d_G(u,F_2)\cdot\beta/10 \right).
  \end{aligned}
  \]
  Note also that $\cA_u$ are independent for different $u\in F_1$.
  Thus,
  \[
    \pr(E_H(F_1,F_2)=\es)
    \le \exp\left(-\frac{\beta}{10}\sum_{u\in F_1} d_G(u,F_2)\right)
    = \exp\left( -\frac{\beta}{10} |E_G(F_1,F_2)| \right)
    \le e^{-c'n\log{n}}
  \]
  for $c'=10^{-7}\beta^3$.
  By taking the union bound over all at most $2^{2n}$ choices of $F_1,F_2$, we see that Condition~4 of \cref{lem:expander} holds \whp{}.

  Our next step is to show that $G[W]$ is Hamiltonian.
  Fix a subgraph $H$ of $G[W]$ which is a $(h/4,2)$-expander.
  To find a Hamilton cycle in $G[W]$ we define a sequence $H_0,H_1,\ldots,H_h$ of subgraphs of $G[W]$ as follows.
  Set $H_0=H$.
  For each $i\ge 0$, if $H_i$ is Hamiltonian then set $H_{i+1}=H_i$; otherwise, let $e_i$ be a booster of $H_i$ which is contained in $G[W]$.
  Note that such a booster is guaranteed to exist by \ref{P:boosters}, as $|E(H_i)|\le |E(H)|+i \le \beta n\log{n}/10+h \le \beta n\log{n}$.
  Evidently, one cannot add $h$ boosters to a graph on $h$ vertices sequentially without making it Hamiltonian, hence $H_h$ is a Hamiltonian subgraph of $G[W]$.

  Now, let $w\in W$, and let $P$ be a Hamilton path in $G[W]$ with $w$ being one of its endpoints.
  Let $R$ be the set of endpoints $y$ of Hamilton paths of $G[W]$ with endpoints $w$ and $y$.
  Evidently, as $G[W]$ is Hamiltonian, $R$ is not empty.
  Moreover, by \cref{lem:Posa} we have $|N_{G[W]}(R)|\le 2|R|-1$.
  Since $G[W]$ is a $(h/4,2)$-expander (since $H$ is such), it must be the case that $|R|>h/4\ge n/40$, so the assertion of the lemma holds.
\end{proof}

We are now ready to prove \cref{lem:prep}.

\begin{proof}[Proof of \cref{lem:prep}]
  Let $K,U_1,U_2$ be the disjoint subsets of $V=V(G)$ obtained in \cref{lem:partition}.
  Set $V^\star= U_1\cup U_2$ and $V'=V\sm V^\star$ (so $K\subseteq V'$ is of size $t$ and $D_1\subseteq K\subseteq D_{\le 2}$, hence \textbf{(a)} is satisfied).
  Note that \textbf{(b)} and \textbf{(c)} are also satisfied by \cref{lem:partition}.
  Let $K\subseteq X\subseteq V'$ satisfy $|X|\le n/2$ and let $w_1,w_2\in X\sm K$.
  Write $V''=V'\sm X$ and partition $V''=V''_1\cup V''_2$ as equally as possible.
  For $i=1,2$, let $W_i=V''_i\cup U_i$, and choose a neighbour $z_i$ of $w_i$ in $W_i$; this is possible since $d(w_i,U_i)\ge \gamma\log{n}/100$ by the condition in \cref{lem:partition}.
  Note that $|W_i|\ge n/5$ and for every $v\in W_i$ it holds that $d(v,W_i)\ge \min\{ d(v), \gamma\log{n}/100\}$, hence by \cref{lem:ham} there exists a set $R_i\subseteq W_i$ with $|R_i|\ge n/40$ such that for every $y\in R_i$ there is a Hamilton path spanning $W_i$ from $z_i$ to $y$.
  In view of \ref{P:pseudorandom}, there exists an edge $e$ between $R_1$ and $R_2$ with endpoints $y_i\in R_i$, say.
  For $i=1,2$, denote by $Q_{y_i}$ the Hamilton path between $w_i$ and $y_i$.
  We now construct a Hamilton path on $G[V(G)\sm X]$ as follows (as depicted in~\cref{fig:ham}):
  \[
    z_1 \xrightarrow{Q_{y_1}} y_1 \xrightarrow{e} y_2 \xrightarrow{Q_{y_2}} z_2,
  \]
  hence \textbf{(d)} is satisfied.
\end{proof}

\begin{figure*}[t]
  \captionsetup{width=0.879\textwidth,font=small}
  \centering
  \includegraphics{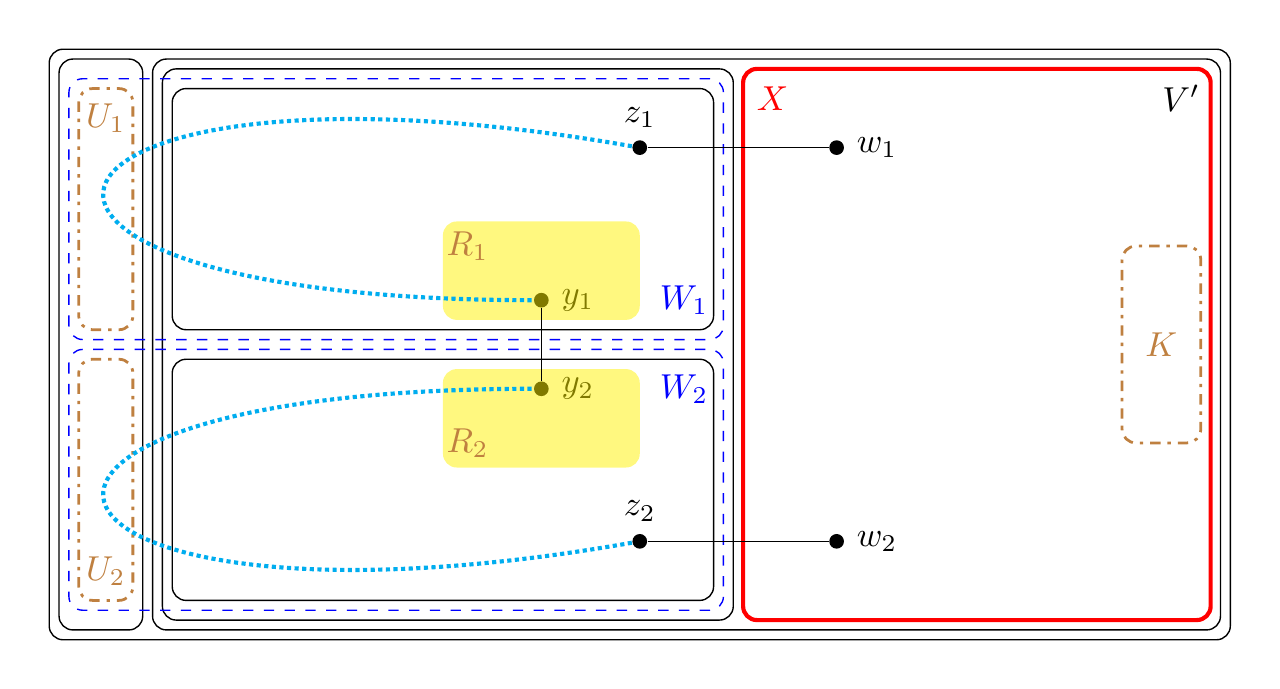}
  \caption{Visualisation of the proof of \cref{lem:prep}.}
  \label{fig:ham}
\end{figure*}

Let $V(G)=V^\star\cup V'$ be the partition obtained by \cref{lem:prep}, and let $K\subseteq V'$ be the set of size $t$ obtained by it.
The following lemma guarantees that $V'$ contains a copy of the KeyChain's ``comb'', which when put together with property \textbf{(d)} from \cref{lem:prep} guarantees the existence of a copy of $\KC(n,t,\ell)$ in $G$.

Write $K=\{x_1,\ldots,x_t\}$, and for each $i\in[t]$ let $w_i$ be an arbitrary neighbour of $x_i$ in $V'$ (there exist such neighbours due to the properties of $K,V'$, and they are distinct due to \ref{P:smalldist}).
Set $Q=\{w_1,\ldots,w_t\}$.
Recall the definitions of $a_j$ and $j_0$ from \cref{sec:intro}.

\begin{lemma}\label{lem:paths}
There is a sequence of paths $P_1,...,P_{t-1} \subseteq G[V']$ for which the following holds:
\begin{enumerate}
  \item The endpoints of $P_i$ are $\{w_i,w_{i+1}\}$ for all $1\le i \le t-1$;
	\item The length of $P_i$ is exactly $\ell$ for all $1\le i \le t-1$;
	\item $V\left( P_i \right) \cap V\left( P_{i+1} \right) = \left\{ w_{i+1} \right\}$ for all $1\le i < t-1$, and $V\left( P_i \right) \cap V\left( P_j \right) = \es$ for all $1\le i,j \le t-1$ such that $|i-j|>1$.
\end{enumerate}
\end{lemma}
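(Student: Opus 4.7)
The plan is to construct $P_1,\dots,P_{t-1}$ iteratively in $G[V']$. While building $P_i$ I let $F$ denote the set of vertices already used by $P_1,\dots,P_{i-1}$ together with the anchors $\{w_j:j\notin\{i,i+1\}\}$, which are forbidden for $P_i$; since each previous path has $\ell+1$ vertices and $\ell=O(\log n/\log\log n)$, this $F$ satisfies $|F|\le t(\ell+1)=O(\log^2 n/\log\log n)=o(n)$ throughout. The path $P_i$ will be found by running a parallel BFS from $w_i$ and from $w_{i+1}$ inside $V'':=V'\setminus K\setminus F$, growing the two trees in lockstep until each shell has size roughly $n/\log n$, and then joining the two halves through a single ``middle'' vertex provided by \ref{P:intersect}.

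More precisely, I maintain two vertex-disjoint layered families $B_0^i,B_1^i,\dots,B_{j_0-1}^i$ and $B_0^{i+1},\dots,B_{j_0-1}^{i+1}$, trimmed so that $|B_k^i|=|B_k^{i+1}|=a_{k+1}$ at every level. To fill the $k$-th shell I use that $V'\setminus K\subseteq V\setminus\Small$ (by condition \textbf{(d)} of \cref{lem:partition}, which places every small-degree non-key into $V^\star$), so every BFS vertex has $G$-degree exceeding $\log n/10$; together with \ref{P:local:sparse} this yields $e_G(B_{k-1}^i,V\setminus B_{k-1}^i)\ge|B_{k-1}^i|\log n/11$, and \ref{P:smallsets} then gives $|N_G(B_{k-1}^i)|\ge|B_{k-1}^i|\log n/18$. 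Subtracting the losses --- at most $200\gamma\log n\cdot|B_{k-1}^i|$ to $V^\star$ by condition \textbf{(c)} of \cref{lem:prep} and $O(|B_{k-1}^i|)$ to the other BFS tree (a geometric bound) --- leaves at least $|B_{k-1}^i|\log n(1/18-200\gamma-o(1))>|B_{k-1}^i|\log n/100=a_{k+1}$ new vertices, preserving the invariant. Upon reaching depth $j_0-1$ the two shells lie in $[10n/\log n,n/9]$, so \ref{P:intersect} produces a common neighbourhood of size at least $n/9$; since the total BFS footprint is dominated by its top layer and equals $(2+o(1))a_{j_0}+|F|=o(n)$, I can pick a vertex $v$ in that common neighbourhood lying outside both BFS trees and outside $F$. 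Choosing neighbours $u_1\in B_{j_0-1}^i$ and $u_2\in B_{j_0-1}^{i+1}$ of $v$, the concatenation
\[
  w_i \to \dots \to u_1 \to v \to u_2 \to \dots \to w_{i+1}
\]
is a simple path of length exactly $(j_0-1)+1+1+(j_0-1)=2j_0=\ell$, simplicity following from the disjointness of the two BFS portions and the freshness of $v$.

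The main obstacle is that the above growth estimate breaks at the very first BFS level: when $|B_0^i|=1$ the additive loss $|F|$ can a priori wipe out $N(w_i)$, since $|F|$ may exceed $d(w_i)\le 10\log n$. The remedy exploits \ref{P:smalldist}: the keys $x_j\in\Small$ lie pairwise at graph distance greater than $0.2\log n/\log\log n\gg 4$, which via the length-$4$ paths $x_j$--$w_j$--$u$--$w_{j'}$--$x_{j'}$ forces $w_j\not\sim w_{j'}$ and $N(w_j)\cap N(w_{j'})=\varnothing$ for all distinct $j,j'$, so the anchor portion of $F$ contributes nothing to $N(w_i)$. Combined with an auxiliary invariant enforced inductively during the iterative construction --- namely that each previously built $P_j$ meets every anchor neighbourhood in only a bounded number of vertices, which can be maintained by steering the BFS away from the small sets $\bigcup_{k\neq j,j+1}N(w_k)$ whenever this does not harm its exponential growth --- one obtains $|N(w_i)\cap F|=o(\log n)$, securing $|B_1^i|\ge d(w_i)-o(\log n)-200\gamma\log n>a_2$ and completing the inductive set-up.
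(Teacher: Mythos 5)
Your proposal follows essentially the same route as the paper: build $P_1,\dots,P_{t-1}$ iteratively, growing layered families of sizes $a_2,a_3,\dots,a_{j_0}$ simultaneously from $w_i$ and from $w_{i+1}$, using \ref{P:smalldist} to keep the small layers disjoint from the keys and anchors, the expansion consequence of \ref{P:smallsets} and \ref{P:local:sparse} to grow each layer by a factor $\log n/100$, and finally \ref{P:intersect} to splice the two halves through a fresh middle vertex, giving a path of length $2j_0=\ell$. The layer sizes, the accounting of footprint, and the invocation of the same properties all line up with the paper's proof. The one place where you are noticeably less precise than the paper is the base of the recursion, which you correctly identify as the delicate point. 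Your fix is an ``auxiliary invariant'' maintained ``by steering the BFS away from $\bigcup_{k\neq j,j+1}N(w_k)$ whenever this does not harm its exponential growth.'' As written this is a gap: if the steering were ever infeasible, the invariant could break and a later $N(w_i)$ could be swallowed by earlier paths. What saves the argument is that the steering is in fact \emph{always} feasible --- for small levels $m$ this is automatic from \ref{P:smalldist} (any intersection of $N'(B_{m-1}^j)$ with some $N(w_k)$, $k\ne j,j+1$, produces a short path between two vertices of $\Small$), and for larger $m$ the anchor neighbourhoods, of total size $O(\log^2 n)$, are negligible next to $a_{m-1}\log n$ --- and this needs to be said. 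The paper sidesteps this bookkeeping more cleanly by reserving disjoint sets $Y_k,Z_k\subseteq N'(w_k)$ of size $a_2$ up front and adding them to a fixed forbidden set $K'$ that all layers avoid, so that the first layer of every path is guaranteed without an induction over earlier paths. Either implementation works; yours just needs the ``steering is always possible'' claim made explicit.
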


\begin{proof}
  For $U\subseteq V^{\prime}$, set $N^{\prime}(U):= N_G(U) \cap V^{\prime}$ (and similarly $N^{\prime}(v):= N_G(v)\cap V^{\prime}$ for $v\in V^{\prime}$). By \ref{P:smalldist}, $Q \cap \Small = \es$, and $N^{\prime}(w_i) \cap N^{\prime}(w_j) = \es$ for all $i \neq j$.
  For each $1\leq i\leq t$ let $Y_i,Z_i \subseteq N^{\prime}(w_i)$ be arbitrary disjoint subsets of size $|Y_i| = |Z_i| = a_2$ (such sets exist, by the construction of $V^{\prime}$ in \cref{lem:prep}).
  We now construct the required paths sequentially.
  For $1\le i<t$ we assume that $P_1,P_2,...,P_{i-1}$ have already been constructed, and construct a path $P_i$ with the desired properties.
  Additionally, we construct $P_i$ to be such that its internal vertices do not belong to $K':=K\cup Q \cup \left( \bigcup_{k=1}^{t-1} \left( Y_k \cup Z_{k+1} \right) \right)$ (and, accordingly, assume that the internal vertices of $P_1,P_2,...,P_{i-1}$ do not belong to $Y_i, Z_{i+1}$).

  Set $S_2 := Y_i$, $T_2 := Z_{i+1}$.
  Now, for $j \leq \ell /2$, given the sets $S_2,...,S_{j-1},T_2,...,T_{j-1}\subseteq V^{\prime}$ we construct sets $S_j,T_j\subseteq V^{\prime}$ with the following properties:
  \begin{itemize}
  	\item $S_j \subseteq N^{\prime}\left( S_{j-1} \right)$, $T_j \subseteq N^{\prime}\left( T_{j-1} \right)$;
  	\item $|S_j| = |T_j| = a_j$;
  	\item $S_j \cap \left( \bigcup\limits_{k=1}^{i-1} V(P_k) \right) = T_j \cap \left( \bigcup\limits_{k=1}^{i-1} V(P_k) \right) = \es$;
  	\item $S_j \cap \left( \bigcup\limits_{k=2}^{j-1} \left( S_k \cup T_k \right) \right) = T_j \cap \left( \bigcup\limits_{k=2}^{j-1} \left( S_k \cup T_k \right) \right) = \es$;
  	\item $S_j \cap K' = T_j \cap K'= S_j \cap T_j = \es$.
  \end{itemize}

  We make the following observation, obtained from properties \ref{P:smallsets},\ref{P:local:sparse} and from the construction of $K,V^{\prime},V^*$ in \cref{lem:prep}:
  if $U \subseteq V^{\prime} \sm K$ is of size at most $10n/\log n$, then $|N^{\prime}(U)| \geq |U|\cdot \log n/30$.
  Indeed, assume otherwise, then
  \[
  |N(U)| \leq |N'(U)| + 200\gamma |U|\log n \leq \left( \frac{1}{30} + \frac{1}{50}\right) |U| \log n \leq |U|\log n/18.
  \]
  On the other hand, since $|U| \leq \frac{10n}{\log n} \leq \gamma n/5000$, by \ref{P:local:sparse}, $U$ spans at most $\gamma |U| \log n/1000$ edges, and since $U \cap \Small = \es$, we have
  \[
  e(U,V(G)\sm U) \ge \sum_{u\in U}d(u) - 2\cdot e(U) \geq |U|\log n/11.
  \]
  a contradiction to \ref{P:smallsets}.
  Therefore, since $|S_{j-1}| = |T_{j-1}| = a_{j-1} \leq a_{\ell /2 -1} < 10n / \log n$, we have
  \[
  	|N^{\prime}(S_{j-1})|,|N^{\prime}(T_{j-1})| \geq a_{j-1} \cdot \log n / 30.
  \]
  This inequality implies the existence of two disjoint subsets $S^{\prime} ,T^{\prime}$ of $N^{\prime}(S_{j-1}),N^{\prime}(T_{j-1})$, respectively, of size at least $a_{j-1}\cdot \log n/60$. In addition, recalling that $\ell = o(\log n)$, we get
  \[
  	\left| \left( \bigcup\limits_{k=1}^{i-1}V(P_i) \right) \cup \left( \bigcup\limits_{k=2}^{j-1} \left( S_k \cup T_k \right) \right)  \right| \leq i\cdot \ell + 2\cdot j \cdot a_{j-1} = o(a_{j-1} \cdot \log n) .
  \]
  We now wish to make sure that we can choose large enough subsets of $S',T'$ which do not intersect $K'$.
  To this end, note that by \ref{P:smalldist} $N^{\prime}(S_2) \cap K', N^{\prime}(T_2) \cap K' \subseteq Q$ and $|Q|=o(a_2\log{n})$,
  and for $j>3$, $|K'|=O(\log^2{n})=o(a_{j-1}\log{n})$, so for every $3\le j\le\ell/2$ we have
  \[
    |N'(S_{j-1})\cap K'|,|N'(T_{j-1})\cap K'| =o( a_{j-1}\cdot\log n).
  \]
  So overall we get
  \[
  	\left|
      \left( S^{\prime} \cup T^{\prime} \right)
      \cap \left(
             K' \cup \left( \bigcup\limits_{k=1}^{i-1}V(P_k) \right)
                \cup \left(
                       \bigcup\limits_{k=2}^{j-1} \left( S_k \cup T_k \right)
                     \right)
           \right)
    \right| = o(a_{j-1}\cdot \log n),
  \]
  which implies that there are subsets $S_j,T_j$ of $S^{\prime} , T^{\prime}$ with all the listed properties.
  Finally, observe that $|S_{\ell /2}| = |T_{\ell /2}| = a_{\ell /2}$, and therefore
  \[
  	10n / \log n \leq |S_{\ell /2}|, |T_{\ell /2}| \leq \frac{10n}{\log n} \cdot \left\lceil \frac{\log n}{100} \right\rceil \leq \frac{n}{9},
  \]
  and therefore, by \ref{P:intersect},
  \[
  |N^{\prime}(S_{\ell /2}) \cap N^{\prime}(T_{\ell /2})| \geq |N_G(S_{\ell /2}) \cap N_G(T_{\ell /2})| - |V^*| \geq \left( \frac{1}{9}-3\gamma \right) \cdot n \geq n/10,
  \]
  which implies that $N^{\prime}(S_{\ell /2}) \cap N^{\prime}(T_{\ell /2})$ contains a vertex that is not a member of $\left( \bigcup_{k=1}^{i-1}V(P_k) \right) \cup \left( \bigcup_{k=2}^{\ell /2-1} \left( S_k \cup T_k \right) \right)$.
  By the definitions of $S_2,...,S_{\ell /2},T_2,...,T_{\ell /2}$, this proves that there is a path $P_i$ of length $\ell$ between $w_i$ and $w_{i+1}$ with all our desired properties.
\end{proof}

This concludes the proof of \cref{lem:kc}.
Indeed, let $P=\bigcup_{i=1}^t P_i$ be the union of the paths we have found in \cref{lem:paths}, and let $X=P\cup \lbrace \{x_i,w_i\} \rbrace _{i=1}^t$ be the ``comb''.
By \cref{lem:prep}, there exist neighbours $z_1\sim w_1$ and $z_t\sim w_t$ outside the comb, and a Hamilton path in $G[V(G)\sm X]$ between $z_1$ and $z_t$.
The union of the comb, the edges $\{w_1,z_1\}$ and $\{w_t,z_t\}$ and the Hamilton path constitutes a copy of $\KC(n,t,\ell)$ in $G$ (see \cref{fig:hamkc}). \qed

\begin{figure*}[t]
  \captionsetup{width=0.879\textwidth,font=small}
  \centering
  \includegraphics{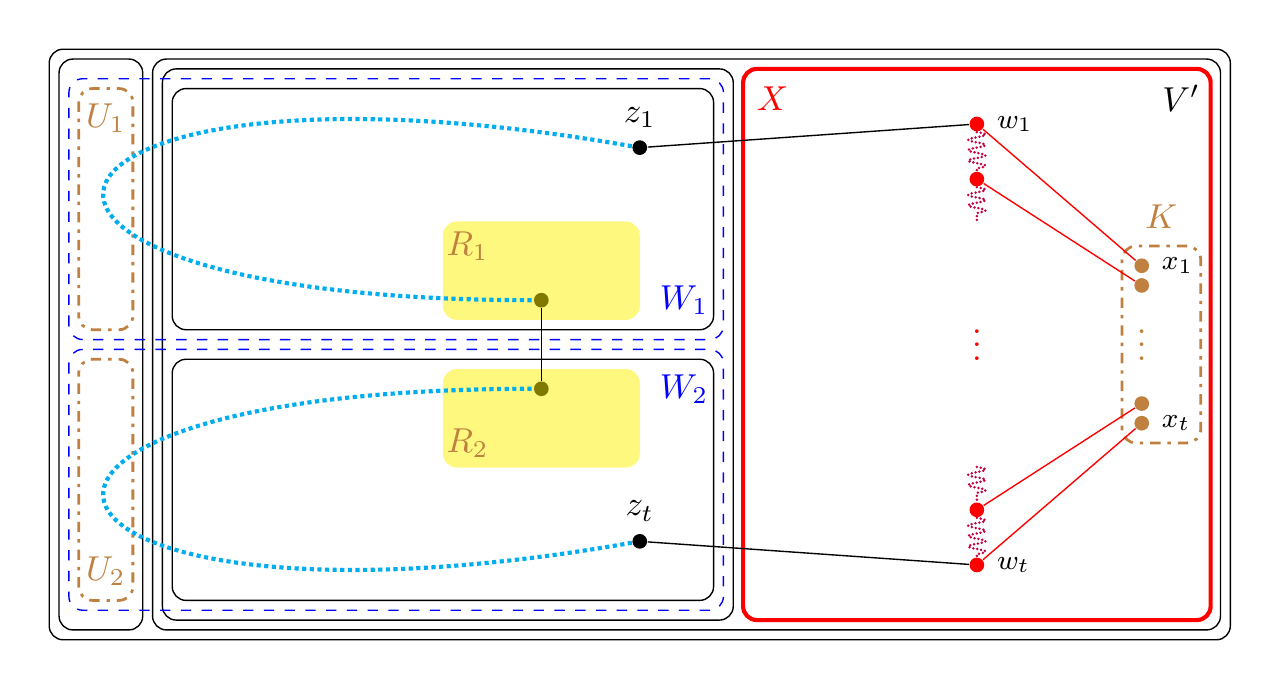}
  \caption{Visualisation of the proof of \cref{lem:kc}.}
  \label{fig:hamkc}
\end{figure*}

\section{Maximum common subgraph}\label{sec:mcs}
In this short section we prove \cref{prop:mcs}.
\begin{proof}[Proof of \cref{prop:mcs}]
  We may assume that $\eps>0$ is small enough.
  Let $\delta=\delta(\eps)>0$ to be chosen later and $p=n^{-1+\delta}$.
  Let $m=(1+\eps)n$, and let $\cA_m$ be the event that there exists a subgraph $H$ of $G_1$ with $m$ edges which is also a subgraph of $G_2$.
  By the union bound over the possible choices of $H$ and the permutations of the vertices of $G_2$, we obtain
  \[
    \pr(\cA_m) \le \binom{\binom{n}{2}}{m}\cdot n!\cdot p^{2m}
      \le \left(
            \left(\frac{enp^2}{2(1+\eps)} \right)^{1+\eps} \cdot n
          \right)^n
      \le \left(2\cdot n^{(-1+2\delta)(1+\eps)+1} \right)^n.
  \]
  Taking $\delta=\delta(\eps)>0$ small enough ($\delta \le \varepsilon /3$ suffices), the last term is vanishing.
\end{proof}

\bibliography{library}

\end{document}